\numberwithin{equation}{section}
\def\Gm{\mathbb{G}_m}
\def\Ga{\mathbb{G}_a}
\def\P{\mathbb{P}}
\def\C{\mathbb{C}}
\def\A{\mathbb{A}}
\def\Z{\mathbb{Z}}
\def\x{\times}
\def\ox{\otimes}
\def\a{\alpha}
\def\sdpLG{L^\ltimes_{pol}G}
\def\pLG{L_{pol}G}
\def\ppLG{L^{+}_{pol}G}
\def\tC{\widetilde{C_0}}
\def\cMG{\mathcal{M}_G}
\def\tlt0{(\tilde LT/T)_0}
\DeclareMathOperator{\ec}{Spec }
\newcommand{\mc}[1]{\mathcal{#1}}
\newcommand{\ol}[1]{\overline{#1}}
\newtheorem{thm}{Theorem}[section]
\newtheorem{prop}[thm]{Proposition}
\newtheorem{lemma}[thm]{Lemma}
\newtheorem{cor}[thm]{Corollary}
\theoremstyle{definition}
\newtheorem{definition}[thm]{Definition}
\theoremstyle{remark}
\newtheorem{rmk}{Remark}
\theoremstyle{notation}
\begin{document}

 \title{Nodal Uniformization of $G$-bundles}
\author{ Pablo Solis}
\address{Department of Mathematics,
 Caltech, 1200 E California Blvd,
Pasadena, CA 91125}
\email{pablos.inbox@gmail.com}

\thanks{I would like to thank Constantin Teleman for suggesting the question of nodal uniformization. I have also had many useful conversations with Xinwen Zhu and I also thank Carlos Simpson for sharing his insight on this problem. Finally, I thank Swarnava Mukhopadhyay for pointing me to the work of Belkale and Fakhruddin and I thank Prakash Belkale for helpful comments and pointing out an error in an earlier version of this paper.}

\maketitle

\tableofcontents

\section{Introduction}
The uniformization theorem says that if $G$ is a split semisimple group over a field $k$ then any $G$ bundle on a smooth affine curve is trivial. In this form the result goes back to a 1967 result of Harder \cite{HarMR0225785} which proves it for $G$ bundles over a Dedekind domain. In more recent work, Beauville and Laszlo \cite{BeaLasMR1289330} and Drinfeld and Simpson \cite{DSMR1362973} generalized the result to triviality of bundles over families of smooth affine curves; see also Teleman \cite{TelMR1646586} for $k = \C$. There is also a version of uniformization for torsors for a non constant group scheme due to Heinloth. Recently Belkale and Fakhruddin have generalized the result to singular curves \cite{BelFak}. 

In this paper we first prove a special case of uniformization for nodal curves, theorem \ref{thm:trivNodalSect}. We apply \ref{thm:trivNodalSect} to construct a relative compactification of the moduli of $G$-bundle over a family of smooth curves degenerating to a nodal curve. Theorem \ref{thm:trivNodalSect} is implied by \cite[thm\;1.4]{BelFak} thus the main contribution of this work is to the application of compactification of moduli of $G$-bundles on nodal curves.

To explain the application we give some more background on uniformization. The families version of uniformization relates three important objects in geometry and representation theory: the loop group $L G$, the affine Grassmannian $Gr_G$ and the moduli stack $\cMG(C)$ of $G$ bundles on $C$. There is a sequence of morphisms
\[
L G \to Gr_G \to \cMG(C)
\]
and arguably {\it the} main corollary of Drinfeld and Simpson's uniformization theorem is that the morphism $Gr_G \to \cMG(C)$ is surjective. This leads to among other results a computation of $Pic(\cMG(C))$ and a proof that $\cMG(C)$ is irreducible. Typically one is in the setting of a family of proper or projective smooth curves $C \to S$ equipped with a principal $G$-bundle $E$ and the uniformization theorem comes in order to ensure $E$ is trivial on the complement of a section. 

In our setting we work over $\C$ and our base $S$ is a smooth curve with a special point $s_0 \in S$ and the curve $C \to S$ is smooth when restricted to $S- s_0$. The singular fiber $C_0$ has a unique node $p_0$. In general the total space $C$ could be smooth or $p_0$ could be a singular point. In the former case a ramified base change $S' \to S$ is necessary to ensure a section passing through $p_0$. Let $U_S$ denote the complement of such a section.

Theorem \ref{thm:trivNodalSect} shows when $G$ is simply connected that $G$ bundles on $U_S$ are trivial. We combine this with a degeneration $X$ of the loop group constructed in \cite{Solis}. The space $X$ is only a partial compactification but it gives an interesting compactification of the moduli stack of bundles on the nodal fiber $C_0$; see proposition \ref{p:mainApp}. Further we give a description of the boundary in terms of compactifications of finite dimensional groups. Specifically, over a fixed nodal curve $C_0$ the stack of bundles $\cMG(C_0)$ is a (non principal) fiber bundle with fiber $G$ over $\cMG(\tC)$. We explain how to compactify the fibers using an equivariant compactification $\ol{G}$ of $G$ yeilding a stack $\ol{\cMG(C_0)}$ which is universally closed. The compactification one obtains from $X$ is a union of stacks of the form $\ol{\cMG(C_0)}$ however only one component uses a compactification of $G$. The other components use compactifications of groups which appear as Levi factors for parahoric subgroups in the loop group. 

We do not know if the simply connectedness assumption can be dropped. It would be interesting to know if nodal uniformization holds for non-simply connected semisimple groups, or more generally for singular curves.

\section{Versions of Uniformization}
Here we state various versions of uniformization in part to review the literature and also in part to raise the question of what is the correct generality to pursue uniformization theorems. 

To utilize the result for moduli stacks of bundles one needs a uniformization theorem {\it in families}. In their 1994 paper on conformal blocks \cite{BeaLasMR1289330}, Beauville and Laszlo prove
\begin{lemma}[$SL_r$ Uniformization]\label{l:SLrCase}
Let $S$ be defined over an algebraically closed field of characteristic $0$. Let  $C \to S$ be a connected projective curve and $D \subset C$ a relatively ample Cartier divisor and set $C^* = C - D$. Then for any $S$-scheme $T$ and any $SL_r$-bundle $E$ on $C_T = C\x_S T$ there is a Zariski cover $T = \cup_i T_i$ such that $E|	_{C^*_{T_i}}$ is trivial.
\end{lemma}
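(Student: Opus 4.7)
The plan is to produce, Zariski-locally on $T$, a collection of $r$ sections of $E$ on $C^*_T$ that trivialize it. Fix a point $t \in T$; it suffices to exhibit a Zariski open $T' \ni t$ and a trivialization of $E|_{C^*_{T'}}$, since such neighborhoods cover $T$. Throughout, write $\pi : C_T \to T$ for the projection.

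The first step is to invoke relative Serre vanishing: relative ampleness of $D$ guarantees that, for $n \gg 0$ and after shrinking $T$ around $t$, both $\pi_*(E(nD))$ and $\pi_*(\cO(rnD))$ are locally free on $T$ with vanishing $R^1$ and formation commuting with base change. This is the key technical input: sections on the single fiber $C_t$ can then be lifted to sections on a Zariski neighborhood.

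Next I would trivialize on the central fiber. Ampleness of $D_t$ makes $C^*_t$ an affine curve, and $E|_{C^*_t}$ is a rank $r$ bundle with trivial determinant on a smooth affine curve over an algebraically closed field. By Serre's splitting theorem (rank greater than Krull dimension yields a free direct summand) combined with the trivial determinant, $E|_{C^*_t} \cong \cO_{C^*_t}^r$. Picking an $SL_r$-trivialization gives sections $\bar\sigma_1, \ldots, \bar\sigma_r$ of $E$ on $C^*_t$ with $\det(\bar\sigma) = 1$. Using the filtration $H^0(C^*_t, E) = \bigcup_n H^0(C_t, E(nD)|_{C_t})$ (via the canonical section $s_D^n$ of $\cO(nD)$), for $n$ large we may regard $\bar\sigma_i \in H^0(C_t, E(nD)|_{C_t})$, and in $H^0(C_t, \cO(rnD))$ their determinant is exactly the canonical section $s_D^{rn}|_{C_t}$ (two sections of $\cO(rnD)|_{C_t}$ agreeing on the dense open $C^*_t$ agree everywhere). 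Cohomology and base change now lift these to sections $\sigma_i \in H^0(C_{T'}, E(nD))$ for some Zariski $T' \ni t$, giving a map $\phi : \cO^r \to E(nD)$ on $C_{T'}$ whose determinant restricts to $s_D^{rn}$ on the central fiber.

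The hard part will be promoting this match on the central fiber to a global equality $\det\phi = s_D^{rn}$ in $H^0(C_{T'}, \cO(rnD))$, after further shrinking $T'$ and modifying the lifts within the kernel of restriction to $C_t$. Naive lifts satisfy only the weaker statement that the zero divisor of $\det\phi$ is linearly equivalent to $rnD$, and small examples on $\A^2_{x,y} \to \A^1_y$ with $D = \{x=0\}$ show that on nearby fibers the zero locus can acquire spurious components outside $D$. To overcome this, I would study the closed subscheme $Y \subset (\pi_*E(nD))^r$ cut out by the equation $\det(\sigma_1, \ldots, \sigma_r) = s_D^{rn}$ in the vector bundle $\pi_*\cO(rnD)$; the fiber $Y_t$ is non-empty by the construction on $C_t$. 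One then verifies $Y \to T$ admits Zariski-local sections near $t$ via an infinitesimal deformation argument: the obstructions to order-by-order lifting of a point of $Y_t$ are controlled by cohomology groups that vanish for $n$ large (Serre vanishing), the differential of $\det$ at a point of $Y_t$ is surjective onto the relevant directions by a Riemann--Roch count, and the affine vector space structure on the fibers of $\pi_*E(nD)$ allows formal solutions to be promoted to Zariski-local ones. Once such a good lift is obtained, $\det\phi = s_D^{rn}$ is nowhere zero on $C^*_{T'}$, so $\phi|_{C^*_{T'}} : \cO^r \isomto E|_{C^*_{T'}}$ is the desired trivialization, and varying $t$ yields the Zariski cover.
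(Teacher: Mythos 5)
Your reduction to the central fiber is fine: $C^*_t$ is affine of dimension $1$, so Serre's splitting theorem together with triviality of the determinant gives $E|_{C^*_t}\cong\cO^r$, and cohomology and base change lifts the resulting sections of $E(nD)|_{C_t}$ to a Zariski neighborhood. The genuine gap is exactly where you flag ``the hard part'': forcing $\det\phi=s_D^{rn}$ after lifting. Your proposed mechanism --- show that the subscheme $Y=\{\det(\sigma_1,\dotsc,\sigma_r)=s_D^{rn}\}$ of $(\pi_*E(nD))^{\oplus r}$ is smooth over $T$ at the chosen point of $Y_t$ and conclude --- does not deliver the statement even if the smoothness were established: $Y$ is cut out by equations of degree $r$ in a vector bundle, and a smooth surjective morphism admits sections only \'etale-locally, not Zariski-locally. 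The ``affine vector space structure'' of the ambient bundle does not linearize a degree-$r$ equation, so nothing promotes a formal or \'etale-local section of $Y\to T$ to a Zariski-local one, and the lemma (and its use downstream) demands a Zariski cover. Moreover, the surjectivity of the differential of $\det$ at $(\bar\sigma_1,\dotsc,\bar\sigma_r)$ is asserted ``by a Riemann--Roch count'' but not verified: that differential is $H^0$ of a sheaf map $E(nD)^{\oplus r}\to\cO(rnD)$ whose cokernel is governed by how the $\bar\sigma_i$ degenerate along $D_t$, which your construction does not control.

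The paper (following Beauville--Laszlo) avoids the determinant-matching problem entirely by inducting on the rank. For $r\ge 2$ and $n\gg 0$ a \emph{general} section of $E(nD)|_{C_t}$ is nowhere vanishing on the whole proper fiber $C_t$, since its zero locus has codimension $r\ge 2$ in a $1$-dimensional scheme; lifting it over a Zariski neighborhood and using properness of $C\to S$, one shrinks $T'$ so the lifted section is nowhere vanishing on all of $C_{T'}$, giving
\[
0\to\cO\to E(nD)|_{C^*_{T'}}\to E'\to 0 .
\]
Affineness of $C^*_{T'}$ splits this sequence, and the determinant hypothesis enters only at the end of the induction, to see that the residual line bundle is trivial on $C^*$ (where $\cO(D)$ is canonically trivialized). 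The point is that the auxiliary section is chosen nonvanishing on the \emph{compact} fiber, so properness converts ``nonvanishing at $t$'' into ``nonvanishing near $t$'' --- precisely the step that fails for your $\det\phi$, which must vanish along $D$. I would rewrite your argument along these lines.
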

\begin{proof}
\cite[lemma\;3.5]{BeaLasMR1289330}. The results is stated for smooth constant families $C \x S$ and for $D$ the image of a section but the proof works more generally. See also \cite[prop.\;3.2]{BelFak}. In outline, one applies induction on the rank of $E$. As $D$ is ample, after a large twist $E(n D)$ one can find a section non vanishing on $C^*$ yielding an exact sequence 
\[
0\to \mc{O}_{C'} \to E(n D)|_{C^*} \to E' \to 0.
\]
Each $C^*_{T_i}$ is affine (see paragraph before thm \ref{thm:trivNodalSect}) hence the sequence splits and by induction we are done.
\end{proof}

The generalization from $SL_r$ to other semi simple groups was handled by Drinfeld and Simpson. 
\paragraph{Smooth Setting}\label{smooth}
Let $S$ be a scheme and $C$ a proper smooth scheme over $S$ with connected geometric fibers of pure dimension $1$. Let $G$ be a split reductive group over $\Z$ and $B$ a Borel subgroup.

\begin{thm}[Smooth Uniformization]\label{thm:DSU}
Assume \ref{smooth}. Suppose further $G$ is semisimple, $E$ a $G$ bundle on $C$ and let $\sigma \colon S\to X$ be a section and set $U = C - \sigma(S)$. There is a faithfully flat base change $S'\to S$ of finite presentation such that $E$ becomes trivial on $U\x_S S'$. If $S$ is a scheme over $\Z[n^{-1}]$ with $n = |\pi_1(G(\C))|$ then $S'$ can be taken to be an etale base change.
\end{thm}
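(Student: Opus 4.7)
The strategy is a d\'evissage through the Borel subgroup: $G$-bundle $\rightsquigarrow$ $B$-bundle $\rightsquigarrow$ $T$-bundle, trivialize on $U$ at the torus level, then reconstruct. Fix a Borel $B \subset G$ with unipotent radical $U_B$ and quotient torus $T = B/U_B$. Since $C \to S$ is a proper smooth relative curve and $\sigma$ is a section, $\sigma(S) \subset C$ is a relatively ample Cartier divisor, so $U = C - \sigma(S)$ is affine over $S$; this fact will be crucial in the last step. The first and hardest task is to produce, after some base change $S' \to S$, a reduction of $E$ to $B$ --- equivalently, a section of the associated flag bundle $E/B \to C$. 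The fibers of $E/B \to C$ are isomorphic to $G/B$, a smooth projective rationally connected variety, and I would produce such a section by a Tsen / rational-connectedness argument applied to the relative curve $C/S$: the scheme of such sections is of finite presentation over $S$, and one shows it surjects onto $S$ after a faithfully flat base change.

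Given a $B$-reduction $E_B$, form the associated $T$-bundle $E_T = E_B \times^B T$, a family of line bundles on $C$ indexed by the characters of $T$. I can freely twist $E_B$ by Hecke modifications supported on $\sigma(S)$, equivalently by cocharacters of $T$ concentrated at $\sigma$. Because $G$ is semisimple, the coroot lattice has finite index in the cocharacter lattice of $T$, so after enough such twists --- and possibly an additional finite base change to trivialize the resulting finite-order obstruction taking values in $\pi_1(G)$ --- the restriction $E_T|_U$ becomes trivial. To upgrade this to a trivialization of $E_B|_U$, filter $U_B$ by root subgroups so that $B$ is an iterated extension of $T$ by copies of $\Ga$; the successive obstructions to lifting a trivialization live in $H^1(U_{S'}, V)$ for coherent sheaves $V$ on $U_{S'}$, and these vanish, Zariski-locally on $S'$, because $U_{S'}$ is affine. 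Inductively this produces the desired trivialization of $E|_U$.

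The main obstacle is the first step, existence of a Borel reduction after faithfully flat --- and, under the hypothesis on $n = |\pi_1(G(\C))|$, \'etale --- base change. For the \'etale refinement, one must check that the scheme of $B$-reductions is smooth, rather than merely flat, over $S$ at points where reductions exist; the obstructions to smoothness live in cohomology with coefficients in a Lie-algebra bundle and are annihilated by $n$, so inverting $n$ removes them. This is the content of the Drinfeld--Simpson analysis, and it is where the subtlety of the theorem resides; once a Borel reduction is available the torus-twist and unipotent-lifting arguments are essentially formal given affineness of $U$ over $S$ and the structure of $B$ as an extension of $T$ by a unipotent group.
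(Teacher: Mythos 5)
There is a genuine gap in the middle step of your d\'evissage. You claim that after Hecke modifications of $E_B$ supported on $\sigma(S)$ the associated $T$-bundle $E_T|_U$ becomes trivial. A Hecke modification at $\sigma$ by a cocharacter $\lambda$ replaces $E_T$ by its twist by $\mc{O}(\sigma(S))(\lambda)$, and $\mc{O}(\sigma(S))$ is canonically trivial on $U = C - \sigma(S)$; so such modifications do not change $E_T|_U$ at all. Nor could any argument of this shape work: for a smooth projective curve $C_s$ of genus $g \geq 1$ one has $\mathrm{Pic}(U_s) \cong \mathrm{Pic}(C_s)/\Z[\mc{O}(\sigma(s))] \cong J(C_s)$, so $E_T|_{U_s}$ carries a continuous invariant that twisting at the single divisor $\sigma(S)$ cannot kill. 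This is precisely why the actual argument (Drinfeld--Simpson, recalled in the paper as Proposition \ref{p:UniFromGL2case}) does something different: it twists the $T$-bundle by $\mc{L}(\a^\vee)$ for \emph{arbitrary} line bundles $\mc{L}$ and simple coroots $\a^\vee$ (any $T$-bundle is a finite product of such twists when $G$ is simply connected), and then proves not that $E_T|_U$ is trivial but that each such twist leaves the associated $G$-bundle $E(G)|_U$ unchanged up to isomorphism locally on $S$. That invariance is reduced to the subgroup generated by $T$ and a principal $SL_2$, i.e.\ to the statement that two $GL_2$-bundles with the same determinant agree on $U$ locally over $S$, which is the content of the Beauville--Laszlo rank induction (Lemma \ref{l:SLrCase}) and uses relative ampleness of $\sigma(S)$ in an essential way. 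So the part of the argument you describe as ``essentially formal'' is in fact where a second substantive input enters; only the final unipotent-lifting step, using affineness of $U$ over $S$, is formal.

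On the first step, your Tsen/rational-connectedness proposal for producing a $B$-reduction is a reasonable heuristic but is not what is done here: the paper (following Drinfeld--Simpson) represents the functor of $B$-reductions by a Hilbert scheme $M_E$, isolates the open locus $M_E^+$ where $H^1(X_s,\sigma^*\mc{T}_{\pi/B})=0$ so that $M_E^+ \to S$ is smooth and hence has \'etale local sections, and then uses Riemann--Roch together with Proposition \ref{p:DSdegAlpha} (existence of $B$-reductions with all $\deg_\a$ arbitrarily negative) to show $M_E^+$ surjects onto $S$. Rational connectedness of $G/B$ alone gives neither the required vanishing of $H^1$ nor the \'etale (as opposed to merely surjective) conclusion. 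Finally, note that the paper does not reprove Theorem \ref{thm:DSU}; its proof is a citation of Drinfeld--Simpson, with Section 2.1 serving as a sketch of their argument, so any self-contained proof would have to supply both inputs above.
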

\begin{proof}
\cite[Thm\;3]{DSMR1362973}
\end{proof}
\begin{rmk}
In this paper we are primarily in the case $G$ is simply connected and defined over $\C$ in which case we can always take an etale base change in the theorem.
\end{rmk}

Theorem \ref{thm:DSU} is deduced by showing any $G$ bundle admits a reduction to a Borel:

\begin{thm}[Smooth $B$-structures] \label{thm:DSB}
Assume \ref{smooth} and $E$ a $G$ bundle on $C$. Then there is an etale base change $S' \to S$ such that $E$ admits reduction to $B$ on $C\x_S S'$.
\end{thm}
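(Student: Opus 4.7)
The plan is to translate $B$-reductions of $E$ as sections of the associated $G/B$-bundle $\pi\colon E/B \to C$, and then show the relative scheme of sections $\mathrm{Sect}(E/B/C/S)\to S$ admits sections etale-locally on $S$. Since $B$-reductions of $E$ over $C\times_S S'$ are exactly sections of $\pi_{S'}$, this would suffice.

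First I would establish fiber-wise existence: for each geometric point $s\in S$, the $G/B$-bundle $\pi|_{C_s}$ over the smooth projective curve $C_s$ admits a section. This is classical and follows from rational connectedness of $G/B$ via the Graber--Harris--Starr theorem, or can be argued more directly using the Bruhat cell structure and extending a section defined on a dense open by the properness of $G/B$.

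Next I would stratify the sections by topological type $\lambda \in X_*(T)$ (defined by pairing with characters of $B$, which give line bundles on $C$ via the associated-bundle construction). The locus $\mathrm{Sect}^\lambda(E/B/C/S) \to S$ of sections of fixed type is a quasi-projective scheme of finite type over $S$. For a section $\sigma$ the deformation theory is governed by $\sigma^* T_\pi$, and the vertical tangent sheaf $T_\pi$ is filtered by line bundles indexed by the roots $\alpha$ of $G$ not in $B$, with $\sigma^* L_\alpha$ a line bundle on $C_s$ whose degree depends linearly on $\lambda$ via $\langle \lambda,\alpha\rangle$. When $\lambda$ is sufficiently dominant, every such degree exceeds $2g-2$, forcing $H^1(C_s,\sigma^* T_\pi)=0$. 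Consequently $\mathrm{Sect}^\lambda \to S$ is smooth at $\sigma$ for all such $\lambda$.

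Finally, to guarantee that $\mathrm{Sect}^\lambda \to S$ is also surjective for some sufficiently dominant $\lambda$, I would modify the fiber-wise section produced in the first step by Hecke modifications at chosen points of $C_s$; these shift the type of a section by any coroot, and so allow one to reach an arbitrarily dominant $\lambda$ starting from any given one. Combining smoothness and surjectivity, a smooth surjective morphism of schemes admits sections etale-locally, yielding the desired $S'\to S$. I expect the main obstacle to be the last point: controlling the Hecke modification argument in families so that a single dominant $\lambda$ works uniformly in $s\in S$, and verifying that the positivity threshold for $H^1$-vanishing is attained simultaneously for all positive roots. Once that uniform dominance is secured, the rest of the argument is formal deformation theory combined with smoothness of a scheme over $S$.
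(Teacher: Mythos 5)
Your proposal follows essentially the same route as the paper's sketch of Drinfeld--Simpson: represent $B$-reductions as sections (the paper uses the Hilbert scheme of graphs to get $M_E$), pass to the open locus where $H^1(C_s,\sigma^*\mc{T}_{\pi/B})=0$ to obtain smoothness over $S$, and then reduce everything to producing, on each geometric fiber, a $B$-reduction whose degrees $\deg_\a$ are uniformly very negative (equivalently, whose type is sufficiently dominant). The step you correctly flag as the main obstacle --- shifting the type by Hecke modifications so that \emph{all} positive roots satisfy the $H^1$-vanishing threshold simultaneously --- is exactly the content of Proposition \ref{p:DSdegAlpha}, which the paper does not reprove but cites from \cite{DSMR1362973}; this is the one genuinely nontrivial input, and it deserves more than a gesture, since a single modification by a coroot moves the various $\deg_\a$ in incompatible directions.
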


In subsection \ref{DSargument} we give a sketch of the proof of theorem \ref{thm:DSU} and how it is used to prove \ref{thm:DSU}. The argument uses the theory of Hilbert schemes together with the Riemann-Roch theorem for curves. 

Even though the statement of theorem \ref{thm:DSU} is very general the argument itself is in fact more general in the sense that various key ideas of the proof hold more generally. As evidence, Belkale and Fakhruddin have generalizations of \ref{thm:DSU},\ref{thm:DSB} for singular curves.

\paragraph{Singular Setting}\label{singular}
Let $S$ be any scheme and $C$ a proper, flat and finitely presented curve over $S$ and $G,B$ as in \ref{smooth}.

\begin{thm}[Singular $B$-structures]\label{BFB}
Assume \ref{singular} and $E$ a $G$ bundle on $C$. Then there is an etale base change $S' \to S$ such that $E$ admits reduction to $B$ on $C\x_S S'$.
\end{thm}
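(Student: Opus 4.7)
The plan is to follow the Drinfeld--Simpson strategy for Theorem \ref{thm:DSB}, relaxing smoothness of $C/S$ to properness and flatness. A reduction of $E$ to $B$ is the same datum as a section of the associated flag bundle $\pi \colon E(G/B) := E \times^G G/B \to C$, which is Zariski-locally trivial with smooth projective fibers $G/B$. So the task is to produce a section of $\pi$ étale-locally on $S$.

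First I would form the relative Hom scheme $\mathcal{S} := \underline{\mathrm{Sect}}(E(G/B)/C/S)$. Since $G/B$ is projective, $\mathcal{S} \to S$ is a disjoint union of quasi-projective $S$-schemes indexed by the numerical class of the section on geometric fibers. At a section $\sigma$ above $s \in S$ the tangent-obstruction theory reads $H^0(C_s, \sigma^* T_\pi)$ and $H^1(C_s, \sigma^* T_\pi)$, where $\sigma^* T_\pi$ is a rank-$\dim(G/B)$ vector bundle on $C_s$. If for each geometric fiber of $S$ one can exhibit a section $\sigma$ with $H^1(C_s, \sigma^* T_\pi) = 0$, then $\mathcal{S} \to S$ is smooth at $\sigma$, the component of $\mathcal{S}$ containing $\sigma$ is smooth and surjective onto a neighborhood of $s$, and any smooth surjection admits étale-local sections, which is exactly the conclusion.

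To construct such a ``sufficiently positive'' $B$-reduction on each (possibly singular) fiber $C_s$, I would proceed by reduction to $SL_n$. Fix a faithful representation $G \hookrightarrow SL_n$, and apply Lemma \ref{l:SLrCase} with $D \subset C_s$ a relatively ample Cartier divisor supported in the smooth locus of $C_s$: the induced $SL_n$-bundle becomes trivial on $C_s \setminus D$, so there $E$ admits many $B$-reductions (any sufficiently generic $B_{SL_n}$-reduction of the trivial bundle, intersected with $G$, supplies one). By properness of $E(G/B) \to C_s$, this rational $B$-reduction extends across the smooth points of $D$; at each node of $C_s$ the extension becomes a local problem, solvable by analyzing sections of a $G/B$-bundle on $\mathrm{Spec}\, k[[x,y]]/(xy)$ using the homogeneity of $G/B$. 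The freedom to modify the section by Hecke-type operations at smooth points of $D$ lets one arrange $\sigma^* T_\pi$ to have arbitrarily large Euler characteristic on each irreducible component of $C_s$, so singular Riemann--Roch plus Serre duality for the Gorenstein curve $C_s$ force $H^1(C_s, \sigma^* T_\pi) = 0$.

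The main obstacle is the local analysis at the nodes. On a smooth curve, positivity of $\sigma^* T_\pi$ and the consequent vanishing of $H^1$ follow from component-wise degree bounds together with Serre duality; on a singular $C_s$ positivity is no longer purely component-by-component, the local-to-global spectral sequence at a node can introduce extra $H^1$ contributions, and the extension of $\sigma$ across a node must be chosen compatibly with the positivity requirement on the global bundle. Handling this cleanly likely requires either a generic smoothness argument along a fixed irreducible component of $\mathcal{S}$---bypassing the pointwise obstruction computation in favor of a dimension count in the spirit of \cite{DSMR1362973}---or a careful local-cohomological analysis at each singular point, which is where I would expect the core new input of Belkale--Fakhruddin to lie.
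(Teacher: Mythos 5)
The paper offers no proof of Theorem \ref{BFB}: it is quoted from Belkale--Fakhruddin \cite{BelFak}, so your attempt can only be compared with the Drinfeld--Simpson template that the paper recalls in subsection \ref{DSargument}. Your global architecture matches that template exactly: represent $B$-reductions by a section (Hilbert) scheme over $S$, observe via the tangent--obstruction theory that the open locus where $H^1(C_s,\sigma^*\mc{T}_{\pi/B})=0$ is smooth over $S$, and conclude using etale-local sections of smooth surjections. That part is sound and is essentially Proposition \ref{p:phiMap}, which as the paper notes needs only properness, not smoothness, of $C\to S$.

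The genuine gap is the fiberwise input, which is the entire content of the theorem: one must prove the singular analogue of Proposition \ref{p:DSdegAlpha}, i.e.\ produce on each geometric fiber $C_s$ a $B$-reduction with every $\deg_\a$ so negative that $H^1(C_s,\sigma^*\mc{T}_{\pi/B})=0$ (via the filtration of $\mc{T}_{\pi/B}$ by the root lines, one needs each graded piece controlled --- a large Euler characteristic of the whole bundle does not suffice). You explicitly defer this step to ``the core new input of Belkale--Fakhruddin,'' so the proposal is an outline rather than a proof. Two of the intermediate steps you do propose would also fail as stated. First, trivializing the associated $SL_n$-bundle $E(SL_n)$ on $C_s\setminus D$ does not trivialize $E$, and a $B_{SL_n}$-reduction does not ``intersect with $G$'' to yield a $B$-reduction of $E$: for a generic Borel $B'\subset SL_n$ the intersection $B'\cap G$ need not be a Borel of $G$ (it can even be trivial), so no $B_G$-reduction is produced this way; generic triviality of $E$ on a curve over an algebraically closed field should instead come from Steinberg's theorem or from a direct construction of sections of $E/B$. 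Second, the setting \ref{singular} allows arbitrary proper flat curves, so your local analysis at $\ec k[[x,y]]/(xy)$ covers only nodes, and the appeal to ``Serre duality for the Gorenstein curve $C_s$'' is unwarranted since general reduced curve singularities (e.g.\ spatial multiple points) are not Gorenstein. Extending a rational $B$-reduction across such singularities compatibly with the required negativity is precisely the hard point, and it is not reduced to homogeneity of $G/B$.
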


\begin{thm}[Singular Uniformization]\label{BFU}
Assume \ref{singular}. Let $D \subset C$ be a be a relatively ample effective Cartier divisor which is flat over $S$, and let $U=C-D$. Assume $G$ is semisimple and simply connected. There is an etale base change $S'\to S$ such that $E$ becomes trivial on $U\x_S S'$.
\end{thm}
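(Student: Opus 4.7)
The plan is to adapt the Drinfeld--Simpson strategy to the singular setting, substituting the singular-case preliminary results for their smooth-case analogs. First, I would invoke Theorem \ref{BFB} to pass, after an etale base change on $S$, to a reduction $E_B$ of $E$ to the Borel $B$; since $G/B$ is proper over $S$, this reduction is defined on all of $C$, and the task reduces to trivializing $E_B|_U$ after further etale base change.

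Next, I would decompose the $B$-bundle. Let $N \subset B$ denote the unipotent radical and $E_T := E_B \times^B T$ the induced $T$-bundle. Because $G$ is simply connected, the fundamental weights $\omega_1, \ldots, \omega_r$ form a $\mathbb{Z}$-basis of $X^*(T)$, so $E_T$ corresponds to an $r$-tuple of line bundles $(L_1, \ldots, L_r)$ on $C$; the $B$-reduction identifies $L_i$ with the top line in a canonical complete flag on the associated bundle $E(V_{\omega_i})$.

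The hard step will be to trivialize each $L_i|_U$ after further etale base change. The approach is this: by Lemma \ref{l:SLrCase} applied to the relatively ample $D$, each $SL_{n_i}$-bundle $E(V_{\omega_i})$ becomes trivial on $U$ Zariski-locally on the base; once trivialized, the top line of the flag is a sub-line bundle of $\cO_U^{n_i}$, and modifying the $B$-reduction by sections of unipotent root subgroups adjusts the isomorphism class of the $L_i$. Using simple-connectedness one then argues that such modifications span enough of the character lattice to simultaneously kill all of the $L_i$ on $U$. This is the central technical obstacle, essentially the rank-$r$ analog of Drinfeld--Simpson carried out in the singular setting, and where the hypothesis on $\pi_1(G)$ is indispensable.

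Finally, once $E_T|_U$ is trivialized, $E_B|_U$ reduces to an $N$-bundle. I would filter $N$ by normal subgroups with $\Ga$ successive quotients; since $U \to S'$ is affine (as $D$ is a relatively ample effective Cartier divisor), each successive $\Ga$-torsor splits by vanishing of $R^1 f_* \cO_U$ for the affine structure map $f \colon U \to S'$, inductively trivializing $E_B|_U$ and hence $E|_U$.
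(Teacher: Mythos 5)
Your first and last steps are sound and agree with the standard route: reduce to $B$ via Theorem \ref{BFB}, and use affineness of $U \to S$ (which does follow from $D$ being relatively ample) to kill the unipotent part via $H^1(U,\Ga)=0$. The problem is the middle step, which you yourself flag as ``the central technical obstacle'': your plan is to trivialize the $T$-bundle, i.e.\ to ``kill all of the $L_i$ on $U$,'' and this is in general impossible. $\mathrm{Pic}(U)$ need not vanish for $U$ affine --- already for a single nodal curve minus a smooth point it contains a copy of $\Gm$ --- so the line bundles $L_i$ cannot be trivialized even after etale base change on $S$. Two subsidiary claims in that step are also off: a sub-line-bundle of $\cO_U^{n_i}$ on an affine scheme need not be trivial, and modifying a $B$-reduction by sections of unipotent root subgroups does not change the induced $T$-bundle at all (the projection $B \to T$ kills those modifications), so such modifications cannot ``adjust the isomorphism class of the $L_i$.''

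The correct resolution, which is Proposition \ref{p:UniFromGL2case} in the text (the same mechanism Drinfeld--Simpson use in the smooth case), is \emph{not} to trivialize the $T$-bundle but to show that the associated $G$-bundle does not see it. First use $H^1(U,B_u)=0$ to identify $B$-bundles with $T$-bundles on $U$. Since $G$ is simply connected, $\hom(\Gm,T)$ is generated by the simple coroots $\a^\vee$, so every $T$-bundle on $U$ is obtained from the trivial one by finitely many twists by pairs $(\mc{L},\a^\vee)$. One then proves that a single such twist does not change the associated $G$-bundle, locally over $S$: the twist takes place inside the subgroup generated by $G_\a \cong SL_2$ and $T$, which is $SL_2\x T'$ or $GL_2\x T'$, and in the $GL_2$ case twisting by $\a^\vee$ preserves the determinant; so the statement reduces exactly to Lemma \ref{l:SLrCase} (whose proof, as noted in the paper, works for singular curves because it only uses that $D$ is relatively ample and $U$ is affine). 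Note also that your appeal to the fundamental weights concerns $X^*(T)$, whereas the lattice that matters for this twisting argument is the cocharacter lattice $\hom(\Gm,T)$; simple connectedness enters precisely by forcing that lattice to be the coroot lattice. Without replacing your ``kill the $L_i$'' step by this twist-invariance argument, the proof has no content at its core.
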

Belkale and Fakhruddin also have a version of theorem \ref{BFU} that doesn't require $G$ to be simply connected but more assumptions are needed \cite[thm\;1.5]{BelFak}.

\subsection{$B$-structures on $G$-bundles and local triviality}\label{DSargument}
Here we give in outline the ideas of \cite{DSMR1362973}.

Let $S$ be a scheme and $X \to S$ a proper morphism. Let $G$ be a split reductive group and $B$ a Borel subgroup and $\pi \colon E \to X$ a principal $G$ bundle. For any $S$-scheme $T$ let $X_T = X\x_S T$ and $E_T = E\x_X X_T$ and $\pi_T \colon E_T \to X_T$.

A {\it $B$-reduction over $T$} is a section $\sigma$ of $E_T/B \to X_T$. Set $E^B_T = \sigma^*E_T$; this is $B$-bundle whose associated $G$ bundle is isomorphic to $E_T$.

Define $\Phi \colon Sch/S \to Set$ by setting $\Phi_E(T)$ to be the set of $B$-reductions of $E$ over $T$. By identifying $\sigma$ with its graph $\Gamma_\sigma \subset X_T \x_T E_T/B$ it follows that $\Phi_E$ is a subfunctor of the Hilbert scheme of subschemes of $X\x_S E/B$ and therefore representable by a scheme $\phi \colon M_E \to S$ of finite presentation over $S$. Also let $\mc{T}_{\pi/B}$ denote the relative tangent bundle of $E/B$ over $X$. So in summary we have:
\[
\xymatrix{E \ar[d]^{\pi}  &  \mc{T}_{\pi/B}\ar[d] & M_E \ar[d]^{\phi}\\ X_S \ar[r]^{\sigma} & E/B   & S}.
\]

For a point $s\in S$ let $\sigma$ be a $B$ reduction of $E_s$. Standard deformation theory shows that the obstruction to lifting $\sigma$ to an infinitesimal thickening of $X_s$ lies in $H^1(X_s, \sigma^*\mc{T}_{\pi/B})$. Therefor by setting 
\[
M^+_E =\{\sigma \in M_E| H^1(X_{\phi(\sigma)}, \sigma^*\mc{T}_{\pi/B}) = 0\}\] 
we get a scheme $M^+_E$ which is smooth over $S$.  Because any smooth morphism has etale local sections we obtain
\begin{prop}\label{p:phiMap}
Let $E$ be a $G$-bundle on $X$ and let $s\in S$. If $s$ lies in the image of $M^+_E$ then $E$ admits a $B$-reduction etale locally at $s$.
\end{prop}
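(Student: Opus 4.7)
The plan is to combine two ingredients already set up in the preamble to the proposition: the scheme $M^+_E$ is smooth over $S$, and any smooth morphism admits etale-local sections through every point of its image. Granting these, the proposition follows by picking a preimage $\sigma \in M^+_E$ of $s$ and lifting an etale neighborhood of $s$ through $\sigma$; by representability of $\Phi_E$, the resulting morphism $S' \to M^+_E \subset M_E$ is exactly a $B$-reduction of $E \times_S S' \to X \times_S S'$.

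To justify smoothness of $M^+_E \to S$, I would first observe that $M^+_E$ is Zariski open in $M_E$: applying cohomology and base change to the universal $B$-reduction $\Sigma \to X \times_S M_E$ (using properness of $X \to S$), the function $\sigma \mapsto h^1(X_{\phi(\sigma)}, \sigma^* \mc{T}_{\pi/B})$ is upper semicontinuous, so its vanishing locus is open. In particular $M^+_E \to S$ is of finite presentation. Formal smoothness at $\sigma \in M^+_E$ over $s \in S$ reduces to extending $\sigma$ across any square-zero thickening $s \hookrightarrow s'$ in $S$ to a section of $E_{s'}/B \to X_{s'}$; standard deformation theory for sections of the smooth morphism $E/B \to X$ identifies the obstruction with a class in $H^1(X_s, \sigma^* \mc{T}_{\pi/B})$, which vanishes by definition of $M^+_E$. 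Combining finite presentation with formal smoothness yields smoothness of $\phi$ at $\sigma$.

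To produce the etale-local section, I use the standard fact that a smooth morphism $f\colon Y \to S$ admits, around any $y \in Y$ with $f(y) = s$, a factorization of a neighborhood $U \ni y$ as an etale map $U \to \mathbb{A}^n_S$ followed by the projection $\mathbb{A}^n_S \to S$. Pulling back a linear section of $\mathbb{A}^n_S \to S$ that passes through the image of $y$ produces a pointed etale neighborhood $(S', s') \to (S, s)$ together with a canonical lift $S' \to U \subset M^+_E$ carrying $s'$ to $\sigma$. Composing with the defining inclusion $M^+_E \hookrightarrow M_E$ gives the sought-after $B$-reduction over $X \times_S S'$. The only step demanding genuine care is the identification of the obstruction in $H^1$ together with the openness of its vanishing; both are standard consequences of the smoothness of $E/B \to X$ and flat, proper base change along $X \to S$, and I expect this bookkeeping (rather than any conceptual difficulty) to be the main work in a detailed write-up.
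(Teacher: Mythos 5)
Your argument is exactly the one the paper uses (the proposition is stated as an immediate consequence of the preceding paragraph): $M^+_E$ is smooth over $S$ because the deformation-theoretic obstruction in $H^1(X_s,\sigma^*\mc{T}_{\pi/B})$ vanishes there, and smooth morphisms admit étale-local sections, which by representability of $\Phi_E$ are precisely $B$-reductions after étale base change. Your write-up simply fills in the standard details (openness via semicontinuity, the local structure of smooth morphisms) that the paper leaves implicit, so it is correct and takes the same route.
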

Assume now $X = C$ is a smooth curve. Let $E^B$ be a $B$ reduction of $E$. For each positive root $\a$ we have a line bundle $E^B\x^\a \Gm$; set $d_\a(E^B) = \deg(E^B\x^\a \Gm).$ One can use the Riemann-Roch theorem to show when that if all $d_\a(E^B)$ are sufficiently negative then $E^B$ corresponds to a point of $M^+_E$. Drinfeld and Simpson prove
\begin{prop}\label{p:DSdegAlpha}
Let $C$ be a smooth projective curve over an algebraically closed field $k$ and $E$ a $G$-bundle on $C$. Then for any number $N$ there is a $B$-reduction $E^B$ such that $\deg_\a(E^B) < −N$ for all positive roots $\a$.
\end{prop}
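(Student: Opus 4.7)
The plan is to start with any $B$-reduction of $E$ and then apply a sequence of local ``simple-root'' modifications at points of $C$ to drive all $d_\alpha$ below $-N$. An initial $B$-reduction $E_0^B$ exists by Theorem~\ref{thm:DSB} applied to $S=\ec k$; since $k$ is algebraically closed, no base change is required.

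For each simple root $\alpha_i$, let $P_i\supset B$ be the associated rank-one parabolic, so that $P_i/B\cong \mathbb{P}^1$. Given a $B$-reduction $E^B$ with induced $P_i$-reduction $E^{P_i}$, the $B$-reductions of $E$ refining $E^{P_i}$ are precisely the sections of the $\mathbb{P}^1$-bundle $\pi_i : E^{P_i}\times^{P_i}(P_i/B)\to C$. For any $p\in C$, an elementary transformation of $\pi_i$ at $p$ (blow up the point of $\pi_i^{-1}(p)$ away from the section, then contract the strict transform of $\pi_i^{-1}(p)$) produces a new section, hence a new $B$-reduction $E^{B'}$ of the same $G$-bundle $E$. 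A local calculation, reducing to the rank-one $SL_2$ case inside the Levi of $P_i$, yields the degree-change formula
\[
d_{\alpha_j}(E^{B'}) \;=\; d_{\alpha_j}(E^B)-\langle \alpha_j,\alpha_i^\vee\rangle \qquad \text{for every simple } \alpha_j.
\]

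Since $\langle \alpha_i,\alpha_i^\vee\rangle=2$ and $\langle \alpha_j,\alpha_i^\vee\rangle\leq 0$ for $j\neq i$, a single modification decreases $d_{\alpha_i}$ by $2$ but can \emph{increase} the other $d_{\alpha_j}$. To bring all degrees down simultaneously, perform $m_i$ modifications along $\alpha_i$ at distinct points of $C$, for each $i$. The total change is $\Delta d_{\alpha_j}=-\langle\alpha_j,\lambda\rangle$ where $\lambda=\sum_i m_i\alpha_i^\vee$. Expanding $2\rho^\vee = \sum_{\alpha>0}\alpha^\vee = \sum_i a_i\alpha_i^\vee$ (each $a_i$ is a positive integer, since at least $\alpha=\alpha_i$ itself contributes), set $m_i=Na_i$ so that $\lambda=2N\rho^\vee$ and $\Delta d_{\alpha_j}=-2N$ for every simple $\alpha_j$. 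By linearity in $\alpha$, $\Delta d_\alpha = -2N\langle \alpha,\rho^\vee\rangle \leq -2N$ for every positive root $\alpha$. Taking $N$ large completes the proof.

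The crux of the argument — and the main obstacle — is verifying the degree-change formula above: that an elementary transformation along the $\mathbb{P}^1$-bundle $\pi_i$ changes each $d_{\alpha_j}$ by exactly $-\langle \alpha_j,\alpha_i^\vee\rangle$. This needs a careful analysis in a local trivialization near $p$, ultimately reducing to the classical $SL_2$ assertion that an elementary transformation changes the degree of a line subbundle of a rank-two bundle by $\pm 1$. Once this is in hand, the iteration is essentially root-system combinatorics, resting on the positivity of the coefficients of $\rho^\vee$ in the simple-coroot basis.
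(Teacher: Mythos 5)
The paper offers no argument for this proposition --- its ``proof'' is the citation \cite[prop\;3]{DSMR1362973} --- so there is nothing internal to compare against, and your proposal must be judged on its own terms. It has a genuine gap at its central step. An elementary transformation of the $\P^1$-bundle $\pi_i\colon E^{P_i}\times^{P_i}(P_i/B)\to C$ (blow up a point of a fiber, contract the strict transform of that fiber) does not return a section of the \emph{same} $\P^1$-bundle: it produces a section of the projectivization of a Hecke modification of the underlying bundle. Already for $G=SL_2$ this is fatal: if $\det E\cong\mc{O}_C$ and $S=\P(E)$, the elementary transform at a point over $p$ is $\P(E')$ with $E'=\ker(E\to (E_p/L_x)\otimes k(p))$, so $\deg\det E'=-1$ is odd and $\P(E')\not\cong\P(E)$ as $\P^1$-bundles over $C$; the new section is a line subbundle of $E'$, not of $E$. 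Hence the operation does not yield ``a new $B$-reduction of the same $G$-bundle $E$,'' and the degree-change formula, while aimed at the right target (the induced $T$-bundle should change by an $\alpha_i^\vee$-twist supported at $p$), is attached to an operation that leaves the set of $B$-reductions of $E$. What must be proved instead is that the \emph{fixed} $\P^1$-bundle $\pi_i$ admits sections making $d_{\alpha_i}$ arbitrarily negative; for $SL_2$ this is the classical statement that a rank-two bundle $E$ has line subbundles of every sufficiently negative degree (for $\deg M\ll 0$ the bundle $E\otimes M^{-1}$ is globally generated and a general section is nowhere vanishing). That Riemann--Roch input is exactly the step your write-up replaces with an operation that does not preserve $E$.

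Two further points. Invoking Theorem \ref{thm:DSB} to get the initial $B$-reduction is circular in the logic of this paper, since Theorem \ref{thm:DSB} is deduced from Proposition \ref{p:phiMap} together with Proposition \ref{p:DSdegAlpha} itself; over an algebraically closed field one should instead get a first $B$-reduction directly (Steinberg's theorem gives generic triviality of $E$, hence a rational section of $E/B$, which extends to all of $C$ by properness). On the positive side, the concluding combinatorics is sound: the additivity $d_\alpha=\sum_i n_i d_{\alpha_i}$ for $\alpha=\sum_i n_i\alpha_i$, the choice $\lambda=2N\rho^\vee$, and $\langle\alpha,\rho^\vee\rangle=\mathrm{ht}(\alpha)\ge 1$ correctly reduce the proposition to the local modification statement for simple roots --- once that statement is actually established for reductions of the same bundle $E$.
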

\begin{proof}
\cite[prop\;3]{DSMR1362973}
\end{proof}
\begin{rmk}\label{r:d_a}
An analogue of this result is proved in the singular case in \cite{BelFak}.
\end{rmk}

The previous two propositions prove theorem \ref{thm:DSB}. The following result together with lemma \ref{l:SLrCase} proves theorem \ref{thm:DSU}.

\begin{prop}\label{p:UniFromGL2case}
Suppose $U \to S$ is an affine morphism and suppose any two $GL_2$ bundles are isomorphic on a cover of $S$ provided they have the same determinant. Let $G$ be semisimple and simply connected. Let $E$ be a $B$-bundle on $U$. Then $E$ reduces to a maximal torus $T\subset B$ and the associated $G$-bundle $E(G)$ is trivial on some cover of $S$.  
\end{prop}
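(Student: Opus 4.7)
The proposition makes two assertions: (a) the $B$-bundle $E$ reduces to the maximal torus $T \subset B$, and (b) the associated $G$-bundle $E(G)$ is trivial on a cover of $S$. For part (a), exploit the split extension $B = T \ltimes U_B$, where $U_B$ is the unipotent radical. The $B$-bundle $E$ has a canonical associated $T$-bundle $E_T := E/U_B$ on $U$, and the task is to show $E \cong E_T \times^T B$. Filter $U_B$ by a sequence of $T$-stable normal subgroups whose successive quotients are $T$-equivariantly isomorphic to $\Ga$ with $T$ acting via a positive root character $\alpha$. Lifting a reduction one step at a time produces obstructions in $H^1(U, E_T \times^T \C_\alpha)$, i.e., in the first cohomology of line bundles on $U$. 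Since $U \to S$ is affine, for any affine open $S_0 \subset S$ the pullback $U \times_S S_0$ is affine, so quasicoherent cohomology in positive degrees vanishes. Hence all obstructions vanish Zariski-locally on $S$, and $E$ reduces to $T$, yielding $E \cong E_T \times^T B$.

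For part (b), we have $E(G) = E_T \times^T G$. Since $G$ is semisimple and simply connected, the simple coroots $\alpha_1^\vee, \ldots, \alpha_r^\vee$ form a $\Z$-basis of the cocharacter lattice $X_*(T)$, giving an isomorphism $T \cong \Gm^r$ via $(t_1, \ldots, t_r) \mapsto \prod_i \alpha_i^\vee(t_i)$. Under this identification, $E_T = L_1 \times_U \cdots \times_U L_r$ is a product of line bundles. For each simple root $\alpha_i$, simple-connectedness of $G$ provides an embedding $\phi_i: SL_2 \hookrightarrow G$ whose diagonal $\Gm$ maps isomorphically onto $\alpha_i^\vee(\Gm) \subset T$. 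The $SL_2$-bundle $L_i \times^\Gm SL_2$ (via the diagonal) is the $SL_2$-bundle underlying the rank-2 vector bundle $L_i \oplus L_i^{-1}$ with trivial determinant, and the $GL_2$-hypothesis applied to $L_i \oplus L_i^{-1}$ and $\cO_U^{\oplus 2}$ trivializes this $SL_2$-bundle on a cover of $S$.

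The remaining step is to combine these individual $SL_2$-trivializations into a trivialization of $E(G)$. The main subtlety is non-commutativity: the subgroups $\phi_i(SL_2) \subset G$ do not commute pairwise when $\langle \alpha_i, \alpha_j^\vee \rangle \neq 0$, so a naive product of sections $L_i \to \phi_i(SL_2) \subset G$ fails to be $T$-equivariant. I would proceed by induction on the semisimple rank $r$: the trivialization of $L_r \times^\Gm SL_2$ yields a section of a reduction of $E(G)$, which can be used to eliminate the $L_r$-factor and reduce the problem to a simply connected semisimple group of rank $r-1$ --- namely, the derived subgroup of the Levi of the parabolic obtained by deleting the simple root $\alpha_r$ (which is itself simply connected because $G$ is). The base case $r = 1$, where $G = SL_2$, is precisely the $GL_2$-hypothesis. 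I expect verifying that this inductive reduction goes through cleanly --- in particular, that the twisted conjugation resulting from the non-commutativity can be absorbed by choosing the trivializing sections carefully --- to be the main technical challenge.
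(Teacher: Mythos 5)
Your part (a) is essentially the paper's argument: filter the unipotent radical by $T$-stable subgroups with $\Ga$ quotients, kill the obstructions using affineness of $U$, and conclude that $H^1(U,B)\to H^1(U,T)$ is split injective. That part is fine.

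Part (b) has a genuine gap, and you have correctly located it yourself: the recombination step. Trivializing each $SL_2$-bundle $L_i\times^{\Gm}SL_2$ separately gives you no direct control over $E_T\times^T G$, because the extension of structure group goes through $T\hookrightarrow G$ all at once, not through the individual $\phi_i(SL_2)$; and your proposed fix --- induction on rank by ``eliminating the $L_r$-factor'' and passing to the derived group of a Levi --- is not carried out and is not obviously repairable as stated. In particular it is unclear what reduction of $E(G)$ the trivialization of $L_r\times^{\Gm}SL_2$ is supposed to produce, and the twisted-conjugation problem you mention is exactly the obstruction. The paper avoids this entirely by never decomposing $E_T$ into a product of line bundles and never trying to glue trivializations. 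Instead it defines the \emph{twist} $E\ox\mc{L}(\lambda)$ of a $T$-bundle by a line bundle $\mc{L}$ and a cocharacter $\lambda$, observes that every $T$-bundle is reached from the trivial one by finitely many twists along simple coroots $\a^\vee$ (here simple connectedness enters, as in your proposal), and then proves the single statement that one twist by $(\mc{L},\a^\vee)$ does not change the local-on-$S$ isomorphism class of the associated $G$-bundle. That single comparison takes place inside the subgroup $G'=\langle G_\a,T\rangle$, which is $SL_2\times T'$ or $GL_2\times T'$ for a torus $T'$; the $T'$-factor is untouched by the twist and the $SL_2$/$GL_2$-factor is handled by the hypothesis (the twist preserves the determinant). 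Since the trivial $T$-bundle induces the trivial $G$-bundle, iterating over the finitely many twists finishes the proof. The point is that the full group $G$ (or $G'$ containing all of $T$) is retained at every step, so no two trivializations ever have to be made compatible and the non-commutativity of the various $\phi_i(SL_2)$ never arises. If you want to salvage your line-bundle decomposition, you should reformulate it as this one-twist-at-a-time comparison rather than as a simultaneous trivialization.
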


\begin{proof}
We can assume $S$ and hence $U$ are affine. Let $B_u$ be the unipotent radical of $B$. As $B_u$ is a successive extension of $\Ga$s and $H^1(U,\Ga)=0$ there are no $B_u$ bundles on $U$. Thus $ i \colon H^1(U,B)  \to H^1(U,T)$ is injective. Also the inclusion $T \subset B$ provides a section to $i$. 

The key to reducing from general $G$ to the $GL_2$ case is to introduce the twisting of a $T$ bundle by pair $(\mc{L},\lambda)$ where $\mc{L}$ is line bundle and $\lambda \in \hom(\Gm,T)$ is a 1 parameter subgroup. Let $\cup_i U_i$ be a Zariski cover of $U$ that trivializes $E$ and $\mc{L}$. Let $t_{ij}$ be the transition functions for $\mc{L}$. The twisted bundle is denoted $E\ox\mc{L}(\lambda)$ and has transition functions given by those of $E$ multiplied by $U_i \cap U_j \xrightarrow{t_{ij}} \Gm \xrightarrow{\lambda} T$. 

Any $T$ bundle on $U$ is obtained by a finite number of twistings of the trivial bundle. Moreover, because $G$ is simply connected $\hom(\Gm,T)$ is generated by the simple co-roots $\a^\vee$ of $G$. Thus we are reduced to showing if $E' = E\ox \mc{L}(\a^\vee)$ then $E(G),E'(G)$ are isomorphic locally over $S$. In fact we can show this with $G$ replaced by the group $G'$ generated by $G_\a$ and $T$ where $G_\a\cong SL_2$ is a principal $SL_2$ subgroup corresponding to $\a$. It is routine to verify that $G' \cong SL_2 \x T'$ or $G' = GL_2 \x T'$ for $T'$ a torus. Clearly this reduces to the question to $G = SL_2,GL_2$ and, in the $GL_2$ case, twisting by $\a^\vee$ doesn't change the determinant thus we are done by hypothesis.   
\end{proof}
\begin{rmk}
It is explained in \cite{DSMR1362973} how to reduce the general case to the non simply connected case. Also the hypothesis of the proposition follow in the smooth curve case from lemma \ref{l:SLrCase}.
\end{rmk}

We should also mention the work of Heinloth \cite{HeinMR2640041}. He has generalized the results of Drinfeld and Simpson to torsors for nonconstant semisimple group schemes $\mc{G}$ over a smooth curve $C$. His approach is quite different from the ideas presented here. The key in Heinloth's approach is to use that the morphism $Gr_{\mc{G}} \to \mc{M}_{\mc{G}}(C)$ is smooth.  

Given that uniformization works for the constant group scheme over singular curves it is tempting to wonder about uniformization for non constant group schemes over singular curves.

\section{Uniformization and $B$-reductions for Nodal Curves}
Here we quickly establish results of uniformization and reduction to a Borel for a fixed nodal curve and for a family of smooth curves degenerating to a nodal curve. The results in this section are either special cases or implied by results in \cite{BelFak} which deals with arbitrary singular curves. The proofs given here are streamlined for nodal curves which is sufficient for our main application in section \ref{s:compact} on compactifications of moduli spaces. 

\begin{lemma}\label{l:G(affNodal)}
Let $C = \ec A$ be an affine nodal curve and $p,q \in C$ distinct smooth points. Let $G$ be a semisimple group and $g\in G(\C)$. Then there is a $\gamma\in G(A)$ such that $\gamma(p) = 1$ and $\gamma(q) = g$.
\end{lemma}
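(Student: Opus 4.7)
The plan is to reduce the problem to the analogous problem for the additive group $\Ga$, using that a semisimple group $G$ is generated by its root subgroups $U_\a$, each isomorphic as a variety to $\Ga$. First, fix a maximal torus $T\subset G$ and a Borel $B\supset T$, giving a root system $\Phi$ and parametrizations $u_\a \colon \Ga \isomto U_\a$. Because $G$ is semisimple, the coroots $\a^\vee$ span $X_*(T)$ up to finite index, so $T = \prod_{\a}\a^\vee(\Gm) \subset \langle U_\a,U_{-\a} : \a \in \Phi\rangle$ (using that $\C^\times$ is divisible); combined with the fact that $G$ is generated by a Borel and its opposite (their product is the dense big cell), this gives $G(\C) = \langle U_\a(\C) : \a \in \Phi\rangle$. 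In particular, I can write
\[
g \;=\; u_{\a_1}(a_1)\, u_{\a_2}(a_2)\, \cdots \, u_{\a_n}(a_n)
\]
for some (not necessarily distinct) roots $\a_i$ and scalars $a_i \in \C$.

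Next I would produce functions $f_1,\dots,f_n \in A$ with $f_i(p)=0$ and $f_i(q)=a_i$ for each $i$. This amounts to surjectivity of the evaluation map $A \to \C\times\C$, $f\mapsto (f(p),f(q))$, which follows from the Chinese Remainder Theorem applied to the distinct, hence coprime, maximal ideals $\mathfrak m_p,\mathfrak m_q \subset A$.

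Finally, set $\gamma_i := u_{\a_i}(f_i) \in U_{\a_i}(A) \subset G(A)$ and $\gamma := \gamma_1\gamma_2\cdots\gamma_n \in G(A)$. Evaluating at $p$ yields $\gamma(p) = u_{\a_1}(0)\cdots u_{\a_n}(0) = 1$; evaluating at $q$ yields $\gamma(q) = u_{\a_1}(a_1)\cdots u_{\a_n}(a_n) = g$, as required. The only substantive ingredient is the generation statement in the first step, which is standard for semisimple groups; the remaining steps are elementary. Note that neither the nodal structure of $C$ nor the smoothness of $p,q$ actually enters the argument—the same proof works for any two distinct closed points of any affine scheme over $\C$.
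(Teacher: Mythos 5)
Your proof is correct and follows essentially the same strategy as the paper: write $g$ as a product of root-group elements $u_{\a_i}(a_i)$ and replace each scalar by a function on $C$ vanishing at $p$ and taking the prescribed value at $q$. The only (immaterial) differences are that the paper first passes to the universal cover $G^{sc}$ to invoke generation by root subgroups whereas you prove the generation statement directly for semisimple $G$, and the paper uses a single interpolating function $f$ scaled by the constants $t_i$ where you invoke CRT to get one function per factor; your closing observation that neither the nodal structure nor smoothness of $p,q$ is used is also consistent with the paper's argument.
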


\begin{proof}
Let $G^{sc}$ be the universal cover of $G$. Let $g'$ be a lift of $g \in G(\C)$ to $G^{sc}(\C)$. Verifying the lemma for $G^{sc}$ and $g'$ implies it for $G$ and $g$. So we can assume $G = G^{sc}$ and $G = \langle U_\a \rangle_{\a \in R}$ where $R$ is the set of roots and $U_\a$ the corresponding root subgroups. Using isomorphisms $\mathbb{G}_a \xrightarrow{\phi_\a} U_\a$ write $g = \prod_i \phi_{\a_i}(t_i)$. Let $m_p,m_q$ be the maximal ideals of $p,q$. As $p \neq q$ there is an $f\in m_p$ such that $f(q) = 1$. We can take $\gamma = \prod_i \phi_{\a_i}(t_i f).$
\end{proof}

\begin{lemma}\label{l:affineNodal}
Let $C$ be a nodal affine curve and $G$ a semisimple group. Then any $G$ bundle on $C$ is trivial.
\end{lemma}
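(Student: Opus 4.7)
The plan is to descend from the normalization and reduce to a multi-point interpolation problem in the spirit of lemma \ref{l:G(affNodal)}. Let $\nu \colon \tilde C \to C$ denote the normalization, a smooth affine curve (possibly disconnected) with coordinate ring $\tilde A$, and enumerate the nodes as $n_1,\ldots,n_k$ with $\nu^{-1}(n_i) = \{p_i,q_i\}$ a pair of distinct smooth points. By Harder's theorem (the smooth case of theorem \ref{thm:DSU}) applied componentwise, $\nu^* E$ is trivial on $\tilde C$, so after choosing a trivialization the bundle $E$ is recovered by gluing the fibers over $p_i$ and $q_i$ via an element $g_i \in G(\C)$ at each node. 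Changing the chosen trivialization by an element $\gamma \in G(\tilde A)$ modifies each $g_i$ to $\gamma(p_i)^{-1} g_i \gamma(q_i)$, so the task reduces to producing $\gamma$ satisfying $\gamma(p_i) = g_i$ and $\gamma(q_i) = 1$ for every $i$.

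For this interpolation I would first reduce to $G$ simply connected, as in the proof of \ref{l:G(affNodal)}: lift each $g_i$ to some $g_i' \in G^{sc}(\C)$ using surjectivity of $G^{sc}(\C) \to G(\C)$ on complex points, solve the interpolation in $G^{sc}(\tilde A)$, and push the resulting element forward to $G(\tilde A)$. To solve it for $G^{sc}$, I would factor each $g_i'$ as a product $\prod_j \phi_{\alpha_{ij}}(t_{ij})$ of root-group elements, then invoke the Chinese Remainder Theorem in $\tilde A$ (the maximal ideals of distinct smooth points of $\tilde C$ are pairwise comaximal) to produce functions $f_i \in \tilde A$ with $f_i(p_i) = 1$, $f_i(p_j) = 0$ for $j \neq i$, and $f_i(q_j) = 0$ for all $j$. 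The elements $\gamma'_i := \prod_j \phi_{\alpha_{ij}}(t_{ij} f_i) \in G^{sc}(\tilde A)$ then evaluate to $g_i'$ at $p_i$ and to the identity at every other marked point, and the product $\gamma' := \gamma'_1 \cdots \gamma'_k$ interpolates the prescribed values; its image in $G(\tilde A)$ is the required $\gamma$.

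The main obstacle is precisely this interpolation step. Lemma \ref{l:G(affNodal)} only produces a single auxiliary function separating two points, whereas here one needs to separate the $2k$ preimages of the nodes simultaneously and verify that substituting the CRT-produced $f_i$ into the root-group factorization of each $g_i'$ really yields a global element of $G^{sc}(\tilde A)$ with the claimed evaluations. Both issues are handled cleanly by the Chinese Remainder Theorem together with the $\Ga$-structure on each root subgroup, but they constitute the genuine content beyond \ref{l:G(affNodal)} and are the place where the root-group generation of the simply connected cover $G^{sc}$ is essential.
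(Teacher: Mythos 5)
Your argument is correct, but it is organized differently from the paper's. The paper proves the lemma by induction on the number of nodes: it normalizes \emph{one node at a time}, notes that the partial normalization $C'$ is still a nodal affine curve with one fewer node (so $\nu^*E$ is trivial by the inductive hypothesis), and then kills the single gluing element $\phi \in G(\C)$ by applying Lemma \ref{l:G(affNodal)} to $C'$ --- this is exactly why that lemma is stated for \emph{nodal} affine curves rather than smooth ones, and why only a two-point interpolation is ever needed. You instead pass to the full normalization $\widetilde{C}$ in one step, invoke Harder componentwise, and then solve a simultaneous $2k$-point interpolation problem, which forces you to upgrade Lemma \ref{l:G(affNodal)} to a multi-point version via the Chinese Remainder Theorem and the $\Ga$-structure of the root subgroups. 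Both routes work: the paper's induction trades the stronger interpolation statement for the need to know the two-point lemma on nodal (not just smooth) curves, while your version keeps the auxiliary curve smooth at the cost of a slightly more elaborate CRT argument (all $2k$ preimages are distinct smooth points, so their maximal ideals are pairwise comaximal, and each $\gamma_i'$ evaluates to the identity at every marked point other than $p_i$ since $\phi_\alpha(0)=1$; the reduction to $G^{sc}$ goes through because $G^{sc}(\C)\to G(\C)$ is surjective). The only cosmetic point worth fixing is the citation: the triviality of semisimple bundles on a smooth affine curve over $\C$ is Harder's absolute statement, not the families statement of Theorem \ref{thm:DSU}.
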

\begin{proof}
Use induction on the number of nodes; the base case being handled by \cite{HarMR0225785}. Assume $C$ has $n$-nodes, $\nu \colon C' \to C$ is the partial normalization at $n$th node $x$ and let $p,q = \nu^{-1}(x)$. By induction, $E' = \nu^*E$ is trivial and obtained by an identifying isomorphism $\phi \colon E'_p \to E'_q$. Fixing a global trivialization of $E'$ allows us to consider $\phi \in G$. We can change $\phi$ by applying an element of $Aut(E') = G(\C[C'])$. Therefore by lemma \ref{l:G(affNodal)}, we can take $\phi = id$. 
\end{proof}

\begin{cor}\label{c:NodalBstr}
Let $C$ be nodal projective curve and $G$ a reductive group and $B$ a Borel subgroup. Any $G$-bundle reduces to $B$ and consequently any $G$-bundle is Zariski locally trivial.
\end{cor}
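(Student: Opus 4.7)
The plan is to construct a $B$-reduction of $E$ in two steps: first produce a section of $\pi\colon E/B \to C$ on a dense affine open where Lemma~\ref{l:affineNodal} applies, and then extend it across the finitely many missing smooth points using the valuative criterion of properness. The Zariski local triviality will then be a formal consequence of the structure of $B$.

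Concretely, I would pick a finite set $D$ of smooth points of $C$ meeting every irreducible component, so that $U := C \setminus D$ is an affine (nodal) curve. By Lemma~\ref{l:affineNodal}, $E|_U$ is trivial (a mild reduction from reductive to semisimple $G$ is needed, handled by passing to the derived subgroup $[G,G]$ and noting that a Borel of $G$ is the central torus times a Borel of $[G,G]$). The trivial $B$-reduction on $U$ then gives a section $\sigma_U \colon U \to E/B$ over $U$. Now $\pi$ is proper: $G$-bundles are \'etale locally trivial for smooth $G$, so $\pi$ is \'etale locally of the form $V \times G/B \to V$, and $G/B$ is projective. Each $d \in D$ is smooth on $C$, hence $\mathcal O_{C,d}$ is a DVR with fraction field $K_d$. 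Applying the valuative criterion to $\pi$, the composition $\mathrm{Spec}\, K_d \to U \xrightarrow{\sigma_U} E/B$ extends uniquely to a morphism $\mathrm{Spec}\, \mathcal O_{C,d} \to E/B$, which by uniqueness is a section of $\pi$ over $\mathrm{Spec}\, \mathcal O_{C,d}$. These local extensions glue with $\sigma_U$ to yield a global section $\sigma \colon C \to E/B$, which is the desired $B$-reduction.

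For the consequence, decompose $B = T \ltimes B_u$. Torus bundles are Zariski locally trivial by definition, and $B_u$-bundles are Zariski locally trivial because $B_u$ is a successive extension of copies of $\Ga$ and $H^1(V, \Ga) = 0$ for any affine $V$. Hence every $B$-bundle, and therefore every $G$-bundle induced from one, is Zariski locally trivial.

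The only nontrivial points are the reductive-to-semisimple step for Lemma~\ref{l:affineNodal} and the properness of $\pi$; both are routine, and I expect the main conceptual content to reside in the valuative-criterion extension, which cleanly replaces the delicate gluing arguments one might otherwise attempt at the nodes.
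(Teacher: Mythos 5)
Your overall architecture --- produce a $B$-reduction of $E$ on a dense affine open $U = C \setminus D$ with $D$ a finite set of smooth points via Lemma \ref{l:affineNodal}, extend the resulting section of $E/B \to C$ across $D$ by the valuative criterion at the DVRs $\mc{O}_{C,d}$, and deduce Zariski local triviality from the structure of $B$ --- is exactly the paper's. The gap is in the step you dismiss as ``mild'': the passage from reductive to semisimple $G$. For reductive but non-semisimple $G$ the assertion that $E|_U$ is trivial on an affine open $U$ chosen \emph{before} looking at $E$ is false: already for $G=\Gm$, a degree-zero line bundle with nontrivial gluing datum at the node need not trivialize on $C\setminus D$ for an arbitrary $D$ (the Picard group of an affine nodal curve is in general nontrivial, carrying a $\Gm$-worth of gluing data at the node). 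Moreover ``passing to the derived subgroup $[G,G]$'' is not a reduction at all: the natural maps of cohomology sets run $H^1(U,[G,G]) \to H^1(U,G) \to H^1(U,G/[G,G])$, so a $G$-bundle does not induce a $[G,G]$-bundle, and the identification of a Borel of $G$ with the product of the central torus and a Borel of $[G,G]$ is only an isogeny in general, not an isomorphism.

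The paper closes this gap by writing $1\to G^{ss}\to G\to T\to 1$ with $T$ the cocenter torus, observing that since $Pic(C)$ is generated by the smooth locus the associated $T$-bundle $E(T)$ is trivial on $U=C\setminus\{p_1,\dotsc,p_n\}$ for smooth points $p_i$ chosen \emph{depending on} $E$, and then using exactness of $H^1(U,G^{ss})\to H^1(U,G)\to H^1(U,T)$ to lift $E|_U$ to a $G^{ss}$-bundle, which is trivial by Lemma \ref{l:affineNodal}; from there your valuative-criterion extension and the $B=T\ltimes B_u$ argument go through as you wrote them. Alternatively, you could repair your proof without the cohomology sequence by passing to the adjoint quotient $G_{ad}=G/Z(G)$ rather than the derived subgroup: there \emph{is} a natural map $E\mapsto E(G_{ad})$, the group $G_{ad}$ is semisimple, and $E/B\cong E(G_{ad})/B_{ad}$ because $G/B\cong G_{ad}/B_{ad}$, so Lemma \ref{l:affineNodal} applied to $E(G_{ad})$ on any affine $U$ produces the section you want.
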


\begin{proof}
Write $G$ as an extension $1\to G^{ss} \to G \to T\to 1$ with $G^{ss}$ semisimple and $T$ a torus. This gives rise to an exact sequence
\begin{equation}\label{eq:exactH1}
H^1_{et}(C,G^{ss}) \to H^1_{et}(C,G) \to H^1_{et}(C,T).
\end{equation}
Let $E$ be a $G$ bundle and $E(T)$ the associated $T$ bundle. As $Pic(C)$ is generated by $C^{sm}$ we have $E(T)$ is trivial $U = C-\{p_1, \dotsc, p_n\}$ with $p_i$ smooth. Hence by \eqref{eq:exactH1}, $E|_U$ comes from an $G^{ss}$-bundle and hence trivial on $U$. In particular $E$ has a reduction to $B$ on $U$. As $E/B \to C$ is projective, the valuative criterion implies the reduction extends over the $p_i$. 
\end{proof}
\begin{rmk}
If $G$ is any linear algebraic group then by using the exact sequence $1\to G_u \to G \to G_{red} \to 1$ where $G_u$ is the unipotent radical one can show any $G$-bundle on $C$ is Zariski locally trivial. But we do not use this.
\end{rmk}

\subsection{Nodal Degeneration}
We assume the following in this section
\paragraph{Nodal Degeneration}\label{mycase}
Let $S$ be a smooth curve with a base point $s_0 \in S$ and $C$ a proper scheme over $S$ with connected geometric fibers of pure dimension $1$. We further assume $C \to S$ is finitely presented and $C$ is smooth on $S -s_0$ and $C_0:=C_{s_0}$ is a nodal curve with a unique node $p_0$. These assumptions imply that $C_0 - p_0$ is smooth, affine and either irreducible or the disjoint union of two smooth affine curves. 

Let $\ec A$ be an etale neighborhood of $s_0 \in S$ and $t \in m_{s_0}$ a local coordinate at $s_0$. A neighborhood of the node is etale locally $\ec A[x,y]/(x y - t^k)$. In the special case $k = 1$ we need to pass to a double cover $t \mapsto t^2$ to ensure there is a section passing through $p_0$. With that special case in mind we let $D_{S'} \subset C\x_S S'$ be the image of a section passing through $p_0$.

We now prove analogues of Drinfeld and Simpson's theorems \ref{thm:DSB},\ref{thm:DSU}. Proposition \ref{p:BstrNodal} which addresses $B$-reductions is not new, it is covered by Belkale and Fakhruddin's theorem \ref{BFB}; however because we are only interested in the nodal case we can give a more direct argument that establishes the result as a corollary of prop \ref{p:DSdegAlpha}. In turn this gives a quick proof of theorem \ref{thm:trivNodalSect}; we note theorem \ref{thm:trivNodalSect} is implied by \cite[Thm\;1.4]{BelFak}.

\begin{prop}\label{p:BstrNodal}
Assume \ref{mycase} and let $G$ be reductive and $E$ a $G$ bundle on $C$. Then there is an etale base change $S' \to S$ such that $E$ admits reduction to $B$ on $C\x_S S'$.
\end{prop}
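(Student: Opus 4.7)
The plan is to reduce the assertion to Proposition \ref{p:phiMap}: it suffices to exhibit a $B$-reduction $\sigma_0$ of $E_0 := E|_{C_0}$ with $H^1(C_0, \sigma_0^* \mathcal{T}_{\pi/B}) = 0$, for then $\sigma_0$ lies in $M^+_E$ and the smoothness of $M^+_E \to S$ provides a $B$-reduction on an etale neighborhood of $s_0$. Over $S - s_0$ the family $C \to S$ is smooth, so Theorem \ref{thm:DSB} produces a $B$-reduction after etale base change there; the disjoint union of the two etale covers is the desired $S'$.

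A $B$-reduction $\sigma_0$ of $E_0$ exists by Corollary \ref{c:NodalBstr}. The obstruction sheaf $\sigma_0^* \mathcal{T}_{\pi/B} \cong E^B_0 \times^B (\mathfrak{g}/\mathfrak{b})$ is an iterated extension of the line bundles $(E^B_0 \times^\alpha \Gm)^{-1}$ of degree $-d_\alpha(E^B_0)$, indexed by the positive roots $\alpha$. Thus, once every $d_\alpha$ is made sufficiently negative, each line bundle in the filtration has degree exceeding $2 p_a(C_0) - 2$; combining the normalization exact sequence $0 \to L \to \nu_* \nu^* L \to L_{p_0} \to 0$ with the vanishing of $H^1(\tC, \nu^* L)$ in high degree then forces $H^1(C_0, \sigma_0^* \mathcal{T}_{\pi/B}) = 0$.

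To push every $d_\alpha$ to $-\infty$ I would pull $\sigma_0$ back to the normalization $\nu: \tC \to C_0$ and invoke Proposition \ref{p:DSdegAlpha} on each connected component. The proof of that proposition in \cite{DSMR1362973} proceeds by local modifications at smooth points of one's choosing; by restricting every such point to $\tC - \{p, q\}$, where $\{p, q\} = \nu^{-1}(p_0)$, we obtain a reduction $\tilde\sigma$ that agrees with the pullback of $\sigma_0$ in neighborhoods of $p, q$ and hence descends to a $B$-reduction $\sigma'_0$ of $E_0$. Since degree is preserved under normalization, the $d_\alpha$ of $\sigma'_0$ equal those of $\tilde\sigma$ (summed over components) and are as negative as desired.

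The main technical obstacle is verifying that the modifications underlying Proposition \ref{p:DSdegAlpha} can indeed be localized away from $p$ and $q$ while still sending every $d_\alpha$ to $-\infty$. This is more of a bookkeeping exercise than a novel difficulty: the proof in \cite{DSMR1362973} performs one Hecke-type modification at a time at a smooth point of the user's choosing, so avoiding the two node preimages is automatic. Granted this, Proposition \ref{p:phiMap} supplies the desired $B$-reduction on an etale neighborhood of $s_0$, and combining with Theorem \ref{thm:DSB} over $S - s_0$ completes the argument.
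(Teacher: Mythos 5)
Your overall architecture matches the paper's: reduce to verifying Proposition \ref{p:phiMap} at $s_0$ by producing a $B$-reduction of $E_0$ with all $\deg_\a$ very negative, obtain such reductions on the normalization $\tC$ via Proposition \ref{p:DSdegAlpha}, and arrange descent to $C_0$. The $H^1$-vanishing via the filtration of $\sigma_0^*\mc{T}_{\pi/B}$ by line bundles of degree $-d_\a$ and the normalization sequence is fine. The problem is the descent step, which is exactly the point you flagged and then dismissed as bookkeeping.

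The claim that one can run the modifications of \cite{DSMR1362973} ``away from $p,q$'' so that the resulting reduction $\tilde\sigma$ \emph{agrees with $\nu^*\sigma_0$ in neighborhoods of $p$ and $q$} cannot be correct as stated: a $B$-reduction is a section of the separated fibration $E/B \to \tC$, and two sections of such a fibration over an integral curve that agree on a nonempty open subset of a component are equal on that component. So any reduction with strictly more negative $\deg_\a$ is a \emph{globally} different section; there is no ``localized'' modification that leaves the old section untouched near $p$ and $q$. (Indeed, in Drinfeld--Simpson's proof the new reduction comes from a generically chosen map $N \to W$ into a rank-$2$ bundle with $\deg N \ll 0$, not from a pointwise surgery on the old one.) What descent actually requires is only that the \emph{values} $\tilde\sigma(p)$ and $\tilde\sigma(q)$ match under the gluing isomorphism of fibers over $p_0$, and to get this you need a strengthening of Proposition \ref{p:DSdegAlpha} producing reductions with prescribed values at finitely many points --- a genuine (if routine) extra argument, which is essentially what Belkale--Fakhruddin prove (cf.\ Remark \ref{r:d_a}); it does not follow ``automatically'' from the statement or the mechanics of \cite[prop\;3]{DSMR1362973}. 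The paper sidesteps this entirely by a different device: it first reduces to $E_0$ trivial (using Corollary \ref{c:NodalBstr}, since changing $E$ on a Zariski neighborhood of $p_0$ moves each $\deg_\a$ by a bounded amount), writes $F=\sigma^*G$ for $\sigma\colon \tC \to G/B$, and post-composes $\sigma$ with a translation by a suitable $g\in G$ so that both $\sigma(p_1),\sigma(p_2)$ land in the big cell $V$; this leaves all $\deg_\a$ unchanged, makes $F$ trivial near $p_1,p_2$, and forces descent. You should either adopt that translation trick or prove the pointed version of Proposition \ref{p:DSdegAlpha}; as written, your key step fails.
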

\begin{proof}
As $C$ is smooth on $S - s_0$ it suffices to verify proposition \ref{p:phiMap} at $s_0$. This will follow if there are $B$-reductions satisfying \ref{p:DSdegAlpha} on the normalization $\nu \colon \tC \to C_0$ that descend to $C_0$. 

By proposition \ref{p:BstrNodal}, $G$ bundles are trivial in a Zariski neighborhood of $p_0$ hence any two $G$-bundles are isomorphic on an open set of $p_0$ and a $B$-reduction on one induces one on the other changing $\deg_\a(E_B)$ by a bounded amount; see paragraph before \ref{p:DSdegAlpha} for the definition of $\deg_\a$. Thus we can reduce to $E$ being the trivial bundle on $C_0$.

Let $F = (\nu^*E)^B$ be a $B$-reduction of $\nu^*E$. If $F$ is trivial on an open set containing $\{p_1,p_2\}=\nu^{-1}(p_0)$ then $F$ descends to $C_0$. Also $F = \sigma^* G$ for a map $\sigma \colon \tC \to G/B$, as $\nu^*E$ is trivial. Moreover $G \to G/B$ is trivial over the image $V$ of $B^-B$. We claim there is $g \in G$ such that $\tC \xrightarrow{\sigma} G/B \xrightarrow{g} G/B$ sends $p_i$ to $V$; the resulting $B$-reduction descends to $C_0$. To prove the claim translate in $G/B$ so  $\sigma(p_1)$ is the base point. Let $g\in G$ be any lift of $\sigma(p_2)$. As $B^-B \cap  B^-B g^{-1} \neq \emptyset$, there is an $h \in B^-B$ such that $h \sigma(p_2) \in V$.

So $F$ descends to $C_0$; call the resulting $B$-bundle $E^B$. Because $F$ is trivial in a neighborhood of $p_1,p_2$ we have $\nu^* (E^B \x^\a \Gm) = F \x^\a \Gm$ and $\deg L = \deg \nu^* L$ for any line bundle hence the result follows from \ref{p:DSdegAlpha}.
\end{proof}

We can now prove theorem \ref{thm:trivNodalSect}. We note again that theorem 1.4 from \cite{BelFak} implies theorem 3.5 below, since a multiple of the section is Cartier, and ample. Specifically, we can directly compute the Picard group in a neighborhood of the singularity $p_0$. It suffices to do this etale locally as for any scheme $Pic_{et}\cong Pic_{Zar}$ (see e.g. \cite[cor.\;11.6]{Milne}). It is well known $Cl(A[x,y]/(x y - t^k)) =\Z/k$ hence a large multiple is Cartier.

\begin{thm}\label{thm:trivNodalSect}
Assume \ref{mycase} and let $G$ be semisimple and let $E$ be a $G$ bundle on $C$. Let $D$ be a section passing through $p_0$ and let $U = C - D$. Then there is an etale base change $S' \to S$ such that $E$ is trivial on $U \x_S S'$.
\end{thm}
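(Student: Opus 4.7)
The plan is to follow the Drinfeld--Simpson strategy laid out in Section \ref{DSargument}, adapted to our nodal setting. First I would apply Proposition \ref{p:BstrNodal} to pass, after an etale base change $S' \to S$, to a $B$-reduction $E^B$ of $E$ on $C \times_S S'$; replacing $S$ by $S'$, I would view $E^B$ as a $B$-bundle on $C$ and restrict to $U$. The theorem will then follow by applying Proposition \ref{p:UniFromGL2case} to $E^B|_U$, provided I can verify its two hypotheses in our nodal situation.

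For the first hypothesis, that $U \to S$ is affine, I would use the class-group computation sketched in the paragraph before the theorem: a neighborhood of $p_0$ is etale locally $\ec A[x,y]/(xy - t^k)$, whose class group is $\Z/k$, so some positive multiple $mD$ is Cartier. Since $mD$ restricts on each fiber of $C/S$ to a nonzero multiple of a single smooth point it is relatively ample, hence $U = C - D = C - mD$ is affine over $S$. For the second hypothesis, that two $GL_2$-bundles on $C_T$ with the same determinant are isomorphic on a cover of $T$, I would proceed as follows: after an etale cover of $T$ trivializing their common determinant line bundle on $C_T$, both become $SL_2$-bundles, and Lemma \ref{l:SLrCase} (applied with the now-Cartier, relatively ample divisor $mD$) trivializes each on $U_{T_i}$ after a further Zariski cover, giving the required isomorphism.

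With both hypotheses in hand and assuming $G$ simply connected (the extension to general semisimple $G$ being handled as in \cite{DSMR1362973}), Proposition \ref{p:UniFromGL2case} produces an etale cover $S'' \to S$ on which $E(G)|_{U \times_S S''}$ is trivial, completing the argument. The main obstacle is the first hypothesis: arranging that $U$ is affine over $S$ in the presence of the nodal singularity of the total space at $p_0$. This is the only genuinely nodal input in the proof, and is precisely what forces the passage to a multiple of $D$; once it is settled, the rest is a direct transcription of the smooth-curve argument from Section \ref{DSargument}.
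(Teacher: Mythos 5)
Your overall route is the same as the paper's: pass to a $B$-reduction via Proposition \ref{p:BstrNodal}, then feed $E^B|_U$ into Proposition \ref{p:UniFromGL2case}, with the two hypotheses checked by (i) showing some Cartier multiple $mD$ is relatively ample so that $U$ is affine over $S$, and (ii) invoking Lemma \ref{l:SLrCase}. However, your verification of the $GL_2$ hypothesis contains a genuine gap. You propose to reduce two $GL_2$-bundles with equal determinant to $SL_2$-bundles by ``an etale cover of $T$ trivializing their common determinant line bundle on $C_T$.'' No base change on $T$ can trivialize a line bundle that is nontrivial on the curve itself: already for $T = \ec\C$ and a positive-genus fiber, $\mathrm{Pic}(C_T)$ (and indeed $\mathrm{Pic}(U_T)$, since removing an ample divisor only kills the subgroup it generates) is far from trivial, so the determinant need not become trivial on any cover of $T$. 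The correct handling, which is what the paper means by ``the $GL_2$ case is proved mutatis mutandis,'' is to rerun the Beauville--Laszlo induction of Lemma \ref{l:SLrCase} directly for $GL_2$-bundles with fixed determinant: twist by $nD$, find a nowhere-vanishing section on $U$, and use affineness of $U_{T_i}$ to split the resulting extension, so that $E|_{U_{T_i}} \cong \mc{O}(-nD) \oplus \det(E)(nD)$ depends only on $\det E$.

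A smaller inaccuracy: you justify relative ampleness of $mD$ by saying it restricts on each fiber to a multiple of a ``single smooth point,'' but on the special fiber $D$ meets $C_0$ precisely at the node $p_0$. The conclusion survives --- and this is exactly why the section is required to pass through the node --- because $p_0$ lies on every irreducible component of $C_0$, so $mD|_{C_0}$ has positive degree on each component and is therefore ample; one then cites the fiberwise criterion for relative ampleness of a proper finitely presented morphism (as the paper does via \cite[9.6.4]{MR0217086}). Had $D$ met $C_0$ at a smooth point of one component of a reducible $C_0$, the degree on the other component would vanish and ampleness would fail. With these two repairs your argument coincides with the paper's proof.
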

\begin{proof}
We can apply \ref{thm:DSU} on $C_{S - s_0}$ thus we need only prove the result for an etale neighborhood of $s_0$. Thus we can assume $S$ is affine. Moreover replacing $S$ with an etale cover $S'$ by \ref{p:BstrNodal} we can assume $E$ reduces to a $B$-bundle. 

We aim to apply \ref{p:UniFromGL2case} and first check the affine hypothesis. To show the morphism $U_S \to S$ is affine if suffices to show $U_S$ is affine. Some multiple $n D$ is Cartier, if it is ample then $U_S$ can be realized as a closed subscheme of the complement of a hyperplane section in some $\P^n_S$. The map $C \to S$ is finitely presented and proper so by \cite[9.6.4]{MR0217086} it suffices to check $n D$ is ample on each fiber.  This is immedite on $C_{S - s_0}$ and on $C_{s_0}$ the restriction to each component of $C_{s_0}$ is positive hence ample. 

To apply \ref{p:UniFromGL2case} it remains check the case $G=SL_2$ and to check that any two $GL_2$ bundles with the same determinant are isomorphic Zariski locally on $S'-s_0$. This last step is handled by \ref{l:SLrCase}. Note to apply \ref{l:SLrCase} is essential that some multiple $nD$ is relatively ample. The $GL_2$ case is proved mutatis mutandis.
\end{proof}

\section{Compactification}\label{s:compact}
Recall the setup in \ref{mycase}. I'll abbreviate $C_{s_0}$ to $C_0$. To warm up we first describe how to use a $G \x G$ equivariant compactification $\ol{G}$ of $G$ to compactify $\cMG(C_0)$. 

If $p_1, \dotsc, p_n\in C$ then $\cMG^{p_1, \dotsc, p_n}(C)$ denotes the stack $G$ bundles $E$ equipped with trivializations $\tau_i \in E_{p_i}$. Let $q \in C_0$ be the node, $\nu \colon \tC \to C_0$ the normalization and $\nu^{-1}(q) = \{p_1,p_2\}$. Then $\cMG^q(C) \to \cMG(C)$ is a $G$ bundle and $\cMG^{p_1,p_2}(\tC) \to \cMG(\tC)$ is a $G \x G$ bundle and we have an equivalence  $\cMG^q(C_0) \cong \cMG^{p_1,p_2}(\tC)$ and a diagram
\[
\xymatrix{ G \ar[d]^{\Delta}\ar[r] & \cMG^q(C_0)\ar[r]^{f_q}\ar[d]^{\cong} & \cMG(C_0)\ar[d]^{\nu^*} \\
G\x G\ar[r] & \cMG^{p_1,p_2}(\tC)\ar[r]^{f_{1,2}} & \cMG(\tC)
}
\]
Here $f_q,f_{1,2}$ are principal bundles for $G$,$G\x G$ whereas $\nu^*$ is a fiber bundle with fiber the homogeneous space $\frac{G \x G}{\Delta(G)}$. In particular, $\nu^*$ does not make $\cMG(C_0)$ into a {\it principal} $G$ bundle over $\cMG$. Indeed, if we try to make $g \in G$ act on $E \in \cMG(C_0)$ we could first lift and $E$ to $(E,\tau_q) \in \cMG^q(C_0)$ then represent $(E,\tau_q) \cong (\nu^*E, \tau_1,\tau_2)$ and then take $g E = f_q(\nu^*E, \tau_1 g ,\tau_2)$. This is not well defined because if we instead chose the lift $(\nu^*E, \tau_1h,\tau_2h)$ then $g E = f_q(\nu^*E, \tau_1 h g ,\tau_2 h) = f_q(\nu^*E, \tau_1 h g h^{-1} ,\tau_2)$ which in general is not equal to $f_q(\nu^*E, \tau_1 g ,\tau_2)$.

In any case we have 
\[
\cMG(C_0) \cong \cMG^{p_1,p_2}(\tC)\x^{(G \x G)} \frac{G \x G}{\Delta(G)}.
\]
and we obtain a compactifcation as
\[
\ol{\cMG(C_0)}:= \cMG^{p_1,p_2}(\tC)\x^{(G \x G)} \ol{G}.
\]
\begin{lemma}
The stack $\ol{\cMG(C_0)}$ is universally closed over $\ec \C$.
\end{lemma}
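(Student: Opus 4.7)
The plan is to factor universal closedness of $\overline{\cMG(C_0)} \to \ec \C$ through the natural projection $\pi\colon \overline{\cMG(C_0)} \to \cMG(\tC)$: I will show first that $\pi$ is proper, and second that $\cMG(\tC) \to \ec \C$ is universally closed. Composition of a proper morphism with a universally closed one is universally closed, which is the desired conclusion.

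For properness of $\pi$: the map $\cMG^{p_1,p_2}(\tC) \to \cMG(\tC)$ is a principal $(G\x G)$-bundle, and since $G \x G$ is smooth this bundle is étale-locally trivial. Pulling back $\pi$ along a trivializing étale cover $U \to \cMG(\tC)$ yields the projection $U \x \ol{G} \to U$, which is proper because $\ol{G}$ is proper. Since properness is étale-local on the target, $\pi$ is proper.

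For universal closedness of $\cMG(\tC)$, I apply the valuative criterion. Let $R$ be a DVR with fraction field $K$, and let $E_K$ be a $G$-bundle on $\tC_K := \tC \x_\C \ec K$; the task is to extend $E_K$ to a $G$-bundle $E_R$ on $\tC_R$ after possibly a finite extension of $R$. For $G = GL_n$ this is essentially automatic, as $\tC_R$ is a regular $2$-dimensional scheme and any coherent extension of the given vector bundle is reflexive, hence locally free. For general semisimple $G$, apply theorem \ref{thm:DSB} after an étale base change on $K$ to reduce $E_K$ to a Borel bundle $E^B_K$; writing $B$ as the semidirect product $T \ltimes B_u$ of the maximal torus $T$ and the unipotent radical $B_u$, the extension problem splits into two pieces. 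First, the associated $T$-bundle is a product of line bundles on $\tC_K$, each of which extends to $\tC_R$ by taking the closure of an underlying Cartier divisor. Second, the unipotent part is built from an iterated filtration whose graded pieces are $\Ga$-torsors, and successive extensions are controlled by cohomology of coherent sheaves on the regular surface $\tC_R$. Combining the two gives $E_R$.

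The main obstacle is the second step of the $G$-bundle extension, that is lifting the $B_u$-filtration: the restriction maps $H^1(\tC_R, \mc{L}) \to H^1(\tC_K, \mc{L}_K)$ controlling successive $\Ga$-extensions are not surjective in general, so the argument must allow further finite base changes in $R$ and possibly modifications of the $B$-reduction to clear denominators. An alternative that sidesteps this is to invoke ind-projectivity of the affine Grassmannian $Gr_G$: by theorem \ref{thm:DSU}, $E_K$ is trivializable on the complement of a section of $\tC_K$, hence represented by a point of $Gr_G(\tC_K)$, and the $K$-point extends to an $R$-point by ind-properness, yielding $E_R$ directly.
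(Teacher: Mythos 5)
Your factorization through $\cMG(\tC)$ and your treatment of the first factor coincide with the paper's own proof: the paper likewise trivializes the $(G\times G)$-torsor $\cMG^{p_1,p_2}(\tC)\to\cMG(\tC)$ \'etale-locally near the relevant point, reducing the lifting problem to extending a map $\ec \C((t))\to\ol{G}$ across $\ec \C[[t]]$, which properness of $\ol{G}$ handles. Where you diverge is that the paper simply takes universal closedness of $\cMG(\tC)\to\ec\C$ for granted (its valuative diagram only addresses the fibration $c$), whereas you try to prove it. Your first attempt --- extending a $B$-reduction over the valuation ring by treating $T$ and $B_u$ separately --- has exactly the gap you flag: the $H^1$ obstructions to extending the successive $\Ga$-steps over the surface $\tC_R$ need not vanish, and finite base change on $R$ does not obviously repair this, so that branch should be discarded rather than patched. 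Your fallback via uniformization and ind-properness of $Gr_G$ is the standard, correct route and is the one to keep: after a finite extension $K'/K$ (permitted in the valuative criterion for stacks), theorem \ref{thm:DSU} together with a formal trivialization along the section represents $E_{K'}$ by a $K'$-point of $Gr_G$, which lies in a projective closed piece of the ind-scheme and hence extends over the valuation ring; Beauville--Laszlo gluing then yields $E_{R'}$. Two small corrections: $\tC$ may be disconnected when $C_0$ is reducible, so apply \ref{thm:DSU} componentwise; and ``a point of $Gr_G(\tC_K)$'' should read ``a $K$-point of $Gr_G$.'' Net effect: once the $B_u$ detour is dropped, your argument is correct and in fact more complete than the paper's, which leaves the second factor unjustified.
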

\begin{proof}
The map $\ol{\cMG(C_0)} \to \ec \C$ factors as 
\[
\ol{\cMG(C_0)} \xrightarrow{c} \cMG(\tC) \to \ec \C.
\] So we just need that $c$ is universally closed. So assume we have a diagram 
\[
\xymatrix{\ec \C((t)) \ar[d] \ar[r]^{l} &  \ar[d] \ol{\cMG(C_0)} & \cMG^{p_1,p_2} \ar[dl]^{f}\\ \ec \C[[t]] \ar[r]^{j}  & \cMG(\tC) & }
\]
Denote by $t$ the closed point of $\ec \C[[t]]$ and let $W$ be an etale neighborhood of $j(t)$ over which $f$ is a trivial fibration. Then $c$ is trivial over $W$ and to extend $l$ is to extend a map $\ec \C((t)) \to \ol{G}$ to $\ec \C[[t]] \to \ol{G}$ which is always possible.
\end{proof}
The generalization to families of curves is more complicated. One must pass from $G$ to the loop group $L G$. We briefly review loop groups.  Let $\mathbf{Aff}_\C$ denote the category of $\C$-algebras, $\mathbf{Set}$ the category of sets and $\mathbf{Grp}$ the category of groups. Let $G$ be an affine algebraic group over $\C$.
\begin{definition}
The loop group $LG\colon \mathbf{Aff}_\C \to \mathbf{Grp}$ is the functor given by $LG(R):= G(R((z)))$ where $R((z))$ is the ring of formal Laurent series with coefficients in $R$.
\end{definition}
It is known that $LG$ is represented by an ind-scheme; an increasing union of infinite dimensional schemes. Elements $g(z) \in LG(R)$ are called loops. We also have positive loops $L^+G(R):= G(R[[z]])$ and polynomial versions $\pLG(R) = G(R[z^\pm])$ and $\ppLG = G(R[z])$. In fact a more relevant group is the semidriect product $\Gm^{rot} \ltimes \pLG$. This is defined by letting $u \in \Gm^{rot}$ act on a loop $g(z)$ by $g(z)\xrightarrow{u} g(u z)$. We abbreviate $\Gm^{rot} \ltimes \pLG$ to $\sdpLG$.

We consider $\sdpLG$ as a $(\sdpLG)^{\x 2}$ space via left and right multiplication. In \cite{Solis} a $(\sdpLG)^{\x 2}$ equivariant partial compactification $X$ of $\sdpLG$ is constructed inspired by the wonderful compactification of an adjoint group. It fits into a diagram 
\begin{equation}\label{eq:LGdegen}
\xymatrix{\sdpLG \ar[r] \ar[d] & X\ar[d] & \ar[l]\partial X \ar[d]\\
1 \ar[r] & \A^1/\Gm & \ar[l] 0/\Gm}
\end{equation}
In general $X$ is an ind-DM stack; in some cases it is actually an ind-scheme. In \cite{Solis} an explicit description is given to $\partial X$. To set up the description let $G$ be simple, simply connected or rank $r$, $T\subset G$ a maximal torus let $\a_0, \dotsc, \a_r$ be the simple roots of $LG$. To each root $\a_i$ we can associate a {\it maximal} parahoric subgroup $P_i \subset L G$ and moreover we can pick a Levi decomposition $P_i = L_i U_i$.

\begin{prop}\label{p:descriptFibers}
Let $G$ be simple, simply connected or rank $r$. The boundary $\partial X$ is a union of $r+1$ components $X_0 \cup \dotsb \cup X_r$ which can be labeled such that $X_i$ is a fibration over a product of flag varieties $\pi_i \colon X_i \to (LG/P_i)^{\x 2}$. For $i\neq 0$ the fibers of $pi_i$ are the wonderful compactification of $L_{i,ad}$ and the fiber of $pi_0$ is a $G^{\x2}$-equivariant compactification of $G$.
\end{prop}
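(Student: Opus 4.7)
The plan is to work directly from the construction of $X$ given in \cite{Solis}, where $X$ is built in the spirit of the wonderful compactification of an adjoint group but adapted to the affine root datum of $\sdpLG$. The key structural input is that the closure of the maximal torus $\Gm^{rot}\times T$ inside $X$ is an affine toric variety $\overline{\LsdT}\subset X$ whose fan encodes the fundamental alcove of the affine root system; because the ambient compactification is $(\sdpLG)^{\times 2}$-equivariant, the global structure of $\partial X$ is completely determined by $\overline{\LsdT}\cap \partial X$ together with the left--right sweep.

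First I identify the components. The fundamental alcove of the affine root system has exactly $r+1$ codimension-one facets, one per affine simple root $\alpha_0,\dots,\alpha_r$. These produce $r+1$ codimension-one toric boundary strata in $\overline{\LsdT}$, and sweeping each by $(\sdpLG)^{\times 2}$ yields the asserted decomposition $\partial X = X_0\cup\cdots\cup X_r$. Checking there are no further components amounts to verifying that the sweeps are closed and disjoint at the generic level, which follows from the affine Bruhat/Birkhoff decomposition together with the fact that the facets of the alcove are permuted transitively by the affine Weyl group on each side.

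Next I construct the fibration $\pi_i$. At a generic point of the $i$th boundary facet, the stabilizers for the left and right $\sdpLG$-actions are each (conjugate to) the pro-unipotent radical $U_i$ of the maximal parahoric $P_i$ attached to $\alpha_i$, by the standard parahoric/Levi analysis in the affine setting. Quotienting on either side produces a $(\sdpLG)^{\times 2}$-equivariant map $\pi_i\colon X_i \to (LG/P_i)^{\times 2}$. To identify the fiber over $(eP_i,eP_i)$, note that it is preserved by $L_i\times L_i$ and inherits from $\overline{\LsdT}$ a toric limit whose fan is the fan of the wonderful compactification of the corresponding Levi quotient. For $i\neq 0$ the Levi $L_i$ is finite-dimensional reductive and the quotient $L_{i,ad}$ is adjoint, so the fiber is identified with the wonderful compactification $\overline{L_{i,ad}}$. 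For $i=0$ the parahoric $P_0$ is (up to the rotation factor) the positive loop group $\ppLG$ whose Levi is $G$ itself; the torus limit then produces a $G^{\times 2}$-equivariant compactification of $G$ of the form discussed in Section~\ref{s:compact}.

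The main obstacle is the asymmetric treatment of $i=0$ versus $i\neq 0$. For $i\neq 0$ one can invoke the wonderful-compactification framework for adjoint groups verbatim, because $L_{i,ad}$ is adjoint and semisimple. For $i=0$ the Levi $L_0=G$ is simply connected, so no wonderful compactification of $G$ exists in the strict sense, and one must read off from the construction of $X$ which $G^{\times 2}$-equivariant compactification actually occurs. In particular one must track how the rotation circle $\Gm^{rot}$---which is singled out precisely at the affine node $\alpha_0$---degenerates, in order to confirm that the fiber is a $G^{\times 2}$-equivariant (rather than $(\Gm^{rot}\times G)^{\times 2}$-equivariant) compactification. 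Once this is checked the proposition follows by assembling the equivariant pieces.
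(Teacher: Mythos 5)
The paper itself offers no proof of Proposition \ref{p:descriptFibers}: it is recalled verbatim from \cite{Solis}, where the compactification $X$ is constructed, so there is no in-paper argument to measure your proposal against. Your outline does follow the strategy the paper attributes to that construction (a toric closure of $\Gm^{rot}\times T$ governed by the fundamental alcove, swept out by the two-sided $\sdpLG$-action, with parahoric stabilizers along the boundary), so the overall architecture is the right one.

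That said, as a standalone argument your write-up has concrete gaps. First, the claim that ``the facets of the alcove are permuted transitively by the affine Weyl group on each side'' is false: $W^{aff}$ acts simply transitively on alcoves, and the $r+1$ facets of the fundamental alcove are \emph{not} a single orbit --- if they were, the components $X_0,\dotsc,X_r$ would be identified with one another, contradicting the statement you are proving. The count of components should instead come from the bijection between codimension-one toric boundary strata of $\overline{\LsdT}$ and the affine simple roots, together with equivariance; disjointness of the sweeps needs a separate argument (e.g.\ distinct generic stabilizer classes), not Weyl-group transitivity. Second, the two hardest identifications --- that the generic stabilizers on either side are exactly the pro-unipotent radicals $U_i$ (rather than something between $U_i$ and $P_i$), and that the induced toric structure on the fiber over $(eP_i,eP_i)$ is the fan of the wonderful compactification of $L_{i,ad}$ --- are asserted rather than derived; these are precisely the points that require the detailed construction in \cite{Solis}. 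Third, you correctly flag the $i=0$ case as the essential asymmetry (the Levi there is the simply connected $G$, which admits no wonderful compactification), but you then defer its resolution to ``reading off the construction,'' so that case is not actually proved. In short: the skeleton is right, but the proposal is an outline whose load-bearing steps are exactly the ones left unverified.
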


The diagram \eqref{eq:LGdegen} suggests viewing $X$ as a degeneration of $\sdpLG$. By taking an appropriate quotient we turn $X$ into a degeneration of the affine grassmannian $Gr_G$. Specifically let $H = \ppLG$ and let $\mc{H}$ be the sheaf of groups over $\A^1/\Gm$ given by sections of $H \x \A^1/\Gm \to \A^1/\Gm$. Then there is an evaluation map $\mc{H} \xrightarrow{ev_0} H$ and let $\mc{H}_1 = \ker ev_0$. This acts on the right of $X$ and we let $X^{Gr} = X/\mc{H}_1$. The generic fiber is now $Gr_G$ and the special fiber remains unchanged and retains a left action of $L G$ on the generic fiber and an $LG\x LG$ action on the special fiber. 

Now we return to our curve $C\to S$. We take $S$ to be affine and $t \in m_{s_0}$ a local coordinate at $s_0$. We view $t$ as a section of $\mc{O}_S(-s_0)$ which we consider as a morphism to $\A^1/\Gm$. Using this morphism we pull back $X^{Gr}$ to obtain an ind-stack over $S$. Let $D$ be a section passing through $p_0$ and $U= C-D$. 

We now define a fiber wise set theoretic action of $G(U)$ on $X^{Gr}$. For each $g \in G(U)$ and $s \in S$ we can Taylor expand the restriction $g_s \in G(U_s)$ in a formal neighborhood of $D_s$. For $s \neq s_0$ this gives an element in $LG$ and for $s=s_0$ we obtain an element in $LG \x LG$. In this way we get an action of $G(U)$ on $X^{Gr}$.

It is desirable to have a more functorial construction which would realize $G(U)$ acting algebraically on $X^{Gr}$. Currently we can prove:
\begin{prop}\label{p:mainApp}
Let $G$ be simple, simply connected of rank $r$. For each $s \in S$ the group $G(U_s)$ acts algebraically on $X^{Gr}_s$. For $s \neq s_0$ the stack quotient $G(U_s)\backslash X^{Gr}$ is $\cMG(C_s)$. For $s=s_0$ we have the union $G(U_{s_0})\backslash X_{s_0}^{Gr} = \bigcup_{i=0}^r G(U_{s_0})\backslash  X_i$ and 
\[
G(U_{s_0})\backslash  X_0 \cong \ol{\cMG(C_0)}:= \cMG^{p_1,p_2}(\tC)\x^{(G \x G)} \ol{G}.
\]
\end{prop}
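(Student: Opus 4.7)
The plan is to reduce every claim to a uniformization statement applied to the appropriate curve (smooth $C_s$ for $s\neq s_0$, the normalization $\tC$ for $s=s_0$) combined with the explicit bundle description of $X^{Gr}$ from Proposition \ref{p:descriptFibers}.

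For $s \neq s_0$ the fiber $X^{Gr}_s$ is the affine Grassmannian $Gr_G$ built on the formal disc at $D_s$, and the action of $G(U_s)$ is the classical one obtained by Taylor-expanding sections in a formal neighborhood of $D_s$; algebraicity is the standard statement for the loop-group action on $Gr_G$. Beauville--Laszlo gluing combined with Theorem \ref{thm:DSU} (triviality of $G$-bundles on $U_s$ after etale cover) then yields the standard uniformization identification $G(U_s)\backslash Gr_G \cong \cMG(C_s)$.

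For $s = s_0$ the action of $G(U_{s_0})$ on $X^{Gr}_{s_0}$ factors through the embedding $G(U_{s_0}) \hookrightarrow LG \times LG$ given by formal expansion at the two preimages $p_1, p_2 \in \tC$ of the node $p_0$, composed with the $(LG)^{\times 2}$-action produced by the construction of $X$. The stratification $X_{s_0} = X_0 \cup \dotsb \cup X_r$ of Proposition \ref{p:descriptFibers} is $(LG)^{\times 2}$-equivariant (the $X_i$ are orbit closures), so $G(U_{s_0})$ preserves it and the quotient splits as a union as claimed. For the identification of $G(U_{s_0})\backslash X_0$, write $X_0$ from Proposition \ref{p:descriptFibers} as the associated bundle
\[
X_0 \;\cong\; (LG \times LG) \times^{(L^+G \times L^+G)} \overline{G},
\]
where each factor $L^+G$ acts on $\overline{G}$ through the evaluation $L^+G \to G$. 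Applying Theorem \ref{thm:trivNodalSect} to $\tC$ at $\{p_1,p_2\}$ gives the identification $G(U_{s_0}) \backslash (LG \times LG) / (L^+G \times L^+G) \cong \cMG(\tC)$; promoting this to keep track of the trivializations at $p_1,p_2$ produces $\cMG^{p_1,p_2}(\tC)$ together with its residual $(G\times G)$-action. Interchanging the commuting actions of $G(U_{s_0})$ on the left of $(LG)^{\times 2}$ and $(L^+G)^{\times 2}$ on the right inside the associated bundle construction then gives
\[
G(U_{s_0})\backslash X_0 \;\cong\; \cMG^{p_1,p_2}(\tC) \times^{G\times G} \overline{G},
\]
which is the definition of $\overline{\cMG(C_0)}$.

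The main obstacle is establishing fiberwise algebraicity of the $G(U_{s_0})$-action on each $X_i$: one must verify that the Taylor-expansion map $G(U_{s_0}) \to (LG)^{\times 2}$ realizes $G(U_{s_0})$ as a closed sub-ind-subgroup whose action on the ind-stack $X_{s_0}$ comes by restriction from the algebraic $(LG)^{\times 2}$-action constructed in \cite{Solis}. Once this is granted, the identification $G(U_{s_0})\backslash X_0 \cong \overline{\cMG(C_0)}$ is formal: it is a descent computation using that $(L^+G)^{\times 2}$ acts freely on $(LG)^{\times 2}$ on the right and commutes with the $G(U_{s_0})$-action on the left, so the order of quotients in the associated bundle construction may be exchanged.
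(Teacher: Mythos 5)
Your argument is correct and takes essentially the same route as the paper: both identify $X_0$ via Proposition \ref{p:descriptFibers} as the associated bundle over $Gr_G^{\times 2}$ with fiber $\ol{G}$ (the paper packages the intermediate step as the moduli stack $\mc{M}_G^U(C_0)$ of bundles trivialized on $U$ and its re-presentation via $\tC$ with framings at $p_1,p_2$), and both then commute the left $G(U_{s_0})$-quotient, controlled by uniformization, past the right associated-bundle construction, with fiberwise algebraicity granted by restriction of the $(LG)^{\times 2}$-action on $X$. The only nitpick is that the double-pointed uniformization of the smooth curve $\tC$ is a consequence of Theorem \ref{thm:DSU} (or Harder's theorem) rather than of Theorem \ref{thm:trivNodalSect}.
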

\begin{proof}
The first statement that the action is fiberwise algebraic holds because the original space $X$ has a fiberwise algebraic action of $L G$. The identification $G(U_{s_0})\backslash  X^{Gr}_s \cong \cMG(C_s)$ follows from Drinfeld and Simpson's uniformization theorem. 

For the final statement, consider the moduli space $\mc{M}_G^U(C_0)$ of pair $(E,\tau)$ where $E$ is a bundle, $\tau$ is a trivialization of $E$ over $C_0 - p_0$. Fixing an isomorphism $\widehat{\mc{O}}_{p_0} \cong \C[[x,y]]/x y$ yields $\mc{M}_G^U(C_0) \cong\frac{L_xG \x L_yG}{G[[x,y]]/x y}$. 

Moreover defining $N_x = \ker(G[[x]] \xrightarrow{x\to 0} G)$ and $N_y$ similarly we have 
\[
 \frac{L_xG \x L_yG}{G[[x,y]]/x y} = \frac{L_xG \x L_yG}{\Delta(G)\ltimes (N_x \x N_y)} 
\]
which realizes $\mc{M}_G^U(C_0)$ as a $G^{\x 2}/\Delta(G) \cong G$ fibration over $Gr_G^{\x 2}$. By proposition \ref{p:descriptFibers} we have
\[
X_0 = \mc{M}_G^U(C_0) \x^{G \x G} \ol{G}.
\]
Now we give a different presentation of $\mc{M}_G^U(C_0)$. Namely consider $\mc{M}^{U,p_1,p_2}_G(\tC)$ which consists of tuples $(E,\tau, t_1,t_2)$ where $E,\tau$ are as before and $t_i$ are framing of $E$ at $p_i$. The pair $(t_1,t_2)$ means $E$ descends to a bundle on $C_0$ with a single framing $t$ at $p_0$ giving rise to the moduli space $\mc{M}_G^{U,p_0}(C_0)$. We have
\begin{align*}
\mc{M}_G^{U}(C_0) &= \mc{M}_G^{U,p_0}(C_0)/G = \mc{M}^{U,p_1,p_2}_G(\tC) \x^{G \x G} G^{\x 2}/\Delta(G)\\
X_0 &\cong \mc{M}^{U,p_1,p_2}_G(\tC) \x^{G \x G} \ol{G}\\
G(U_{s_0})\backslash  X_0 &\cong \cMG^{p_1,p_2}(\tC)\x^{(G \x G)} \ol{G}.
\end{align*}
\end{proof}
 We can give a similar description for the other components $G(U_{s_0})\backslash  X_i$ with $i\neq 0$. For each $i\neq 0$ define a sheaf of groups $\mc{G}_i$ on $C_0$ which has $\mc{G}_i(\widehat{\mc{O}}_q) = G(\widehat{\mc{O}}_q)$ for $q \neq p_0$ and $\mc{G}_i(\widehat{\mc{O}}_{p_0}) = \Delta(L_{i,ad}) \ltimes (U_i \x U_i)$ where $L_{i,ad}$ is the adjoint form of $L_i$.
 
 Let $\mc{M}_{\mc{G}_i}(C_0)$ denote the moduli stack of torsors for $\mc{G}_i$ on $C_0$. As $\mc{G}_i$ is the sheaf of groups associated to the constant group scheme away from $p_0$, all $\mc{G}_i$ torsors are just $G$-bundle on $U_{s_0} = C_0 - p_0$ and are trivial. In particular we have
 \[
 \mc{M}_{\mc{G}_i}(C_0) = G(U_0)\backslash LG \x LG/\mc{G}_i(\widehat{\mc{O}}_{p_0}).
 \]
 Arguing as in the previous proposition we can also obtain 
 \[
G(U_{s_0}) \backslash X_i \cong \mc{M}_{\mc{G}_i}^{p_1,p_2} \x^{L_i \x L_i} \ol{L_{i,ad}}.
 \]
 
In particular the special fiber $G(U_{s_0})\backslash X_{s_0}^{Gr}$ in proposition \ref{p:mainApp} is a union of components each of which is universally closed.

A construction of a relative compactification which is not only fiberwise is work in progress.

\bibliographystyle{plain} 

\bibliography{nodalUniformization}

\def\cprime{$'$}
\begin{thebibliography}{1}

\bibitem{MR0217086}
Grothendieck A.
\newblock \'elements g\'eom\'etrie alg\'ebrique. {IV}. \'etude locale des
  sch\'emas et des morphismes . {III}.
\newblock {\em Inst. Hautes \'Etudes Sci. Publ. Math.}, (28):255, 1966.

\bibitem{BeaLasMR1289330}
Arnaud Beauville and Yves Laszlo.
\newblock Conformal blocks and generalized theta functions.
\newblock {\em Comm. Math. Phys.}, 164(2):385--419, 1994.

\bibitem{BelFak}
Prakash Belkale and Najmuddin Fakhruddin.
\newblock Triviality properties of principal bundles on singular curves.
\newblock {\em arxiv 1509.06425}.

\bibitem{DSMR1362973}
V.~G. Drinfel{\cprime}d and Carlos Simpson.
\newblock {$B$}-structures on {$G$}-bundles and local triviality.
\newblock {\em Math. Res. Lett.}, 2(6):823--829, 1995.

\bibitem{HarMR0225785}
G{\"u}nter Harder.
\newblock Halbeinfache {G}ruppenschemata \"uber {D}edekindringen.
\newblock {\em Invent. Math.}, 4:165--191, 1967.

\bibitem{HeinMR2640041}
Jochen Heinloth.
\newblock Uniformization of {$\mathcal{G}$}-bundles.
\newblock {\em Math. Ann.}, 347(3):499--528, 2010.

\bibitem{Milne}
J.S. Milne.
\newblock Lectures on \'etale cohomology.
\newblock {\em www.jmilne.org/math/CourseNotes/LEC.pdf}.

\bibitem{Solis}
P.~Solis.
\newblock A wonderful embedding of the loop group.
\newblock {\em arxiv 1208.1590}.

\bibitem{TelMR1646586}
Constantin Teleman.
\newblock Borel-{W}eil-{B}ott theory on the moduli stack of {$G$}-bundles over
  a curve.
\newblock {\em Invent. Math.}, 134(1):1--57, 1998.

\end{thebibliography}

\end{document}